\numberwithin{equation}{section} \theoremstyle{plain}
\newtheorem{thm}{Theorem}[section]
\newtheorem{prop}[thm]{Proposition}
\newtheorem{cor}[thm]{Corollary}
\newtheorem{lem}[thm]{Lemma}
\theoremstyle{definition}
\newtheorem{exa}[thm]{{\it Example}}
\theoremstyle{remark}
\newtheorem{rem}[thm]{{\it Remark}}
\DeclareMathOperator{\dom}{D}
\DeclareMathOperator{\lin}{\mbox{\sc lin}}
\DeclareMathOperator{\D}{d}
\newcommand*{\borel}[1]{\mathfrak{B}(#1)}
\newcommand*{\card}[1]{\mathrm{card}(#1)}
\newcommand*{\is}[2]{\langle#1,#2\rangle}
\newcommand*{\cbb}{\mathbb C}
\newcommand*{\zbb}{\mathbb Z}
\newcommand*{\rbb}{\mathbb R}
\newcommand*{\nbb}{\mathbb N}
\newcommand*{\rbop}{{\overline{\mathbb R}_+}}
\newcommand*{\varthetab}{\boldsymbol \vartheta}
\newcommand*{\lambdab}{\boldsymbol \lambda}
\newcommand*{\lambdawb}{ \widetilde{\boldsymbol\lambda}}
\newcommand*{\varLambdab}{\boldsymbol \varLambda}
\newcommand*{\varXib}{\boldsymbol \varXi}
\newcommand*{\varGammab}{\boldsymbol \varGamma}
\newcommand*{\varDeltab}{\boldsymbol \varDelta}
\newcommand*{\hhb}{\boldsymbol{\mathcal{H}}}
\newcommand*{\kkb}{\boldsymbol{\mathcal{K}}}
\newcommand*{\bsf}{\boldsymbol{f}}
\newcommand*{\bsg}{\boldsymbol{g}}
\newcommand*{\bsE}{\boldsymbol{E}}
\newcommand*{\bsF}{\boldsymbol{F}}
\newcommand*{\varThetab}{\boldsymbol{\varTheta}}
\newcommand*{\varPsib}{\boldsymbol{\varPsi}}
\newcommand*{\hh}{\mathcal H}
\newcommand*{\ff}{\mathcal F}
\newcommand*{\ee}{\mathcal E}
\newcommand*{\kk}{\mathcal K}
\newcommand*{\cfl}{C_{\phi, \varLambdab}}
\newcommand*{\cflx}{C_{\phi,\varLambdab,x}}
\newcommand*{\ascr}{{\mathscr A}}
\newcommand*{\bscr}{{\mathscr B}}
\newcommand*{\mscr}{{\mathscr M}}
\newcommand*{\fscr}{{\mathscr F}}
\def\bsl{{\boldsymbol L}}
\def\bsb{{\boldsymbol B}}
\newcommand*{\gsf}{\mathsf{g}}
\newcommand*{\jsf}{\mathsf{j}}
\newcommand*{\Gsf}{\mathsf{G}}
\newcommand*{\hsf}{{\mathsf h}}
\newcommand*{\Ge}{\geqslant}
\newcommand*{\Le}{\leqslant}
\begin{document}
\setstretch{1.2}
\title[Quasinormal extensions of subnormal operator-weighted composition operators]{Quasinormal extensions of subnormal operator-weighted composition operators in $\ell^2$-spaces}
\author[P.\ Budzy\'{n}ski]{Piotr Budzy\'{n}ski}
\address{Katedra Zastosowa\'{n} Matematyki, Uniwersytet Rolniczy w Krakowie, ul. Balicka 253c, 30-189 Krak\'ow, Poland}
\email{piotr.budzynski@ur.krakow.pl}

\author[P.\ Dymek]{Piotr Dymek}
\address{Katedra Zastosowa\'{n} Matematyki, Uniwersytet Rolniczy w Krakowie, ul. Balicka 253c, 30-189 Krak\'ow, Poland}
\email{piotr.dymek@ur.krakow.pl}

\author[A.\ P\l aneta]{Artur P\l aneta}
\address{Katedra Zastosowa\'{n} Matematyki, Uniwersytet Rolniczy w Krakowie, ul. Balicka 253c, 30-189 Krak\'ow, Poland}
\email{artur.planeta@ur.krakow.pl}

\subjclass[2010]{Primary 47B20, 47B37; Secondary 44A60}
\keywords{operator-weighted composition operator, quasinormal operator, subnormal operator}
\begin{abstract}
We prove the subnormality of an operator-weighted composition operator whose symbol is a transformation of a discrete measure space and weights are multiplication operators in $L^2$-spaces under the assumption of existence of a family of probability measures whose Radon-Nikodym derivatives behave regular along the trajectories of the symbol. We build the quasinormal extension which is a weighted composition operator induced by the same symbol. We give auxiliary results concerning commutativity of operator-weighted composition operators with multiplication operators.
\end{abstract}
\maketitle
\section{Introduction}
Recent years have brought rapid development in studies over unbounded composition operators in $L^2$-spaces (see \cite{b-jfsa-2012, b-d-j-s-aaa-2013,  b-d-p-f-2016, b-d-p-bjma-2016, b-j-j-s-ampa-2014, b-j-j-s-jmaa-2014, b-j-j-s-jfa-2015, b-j-j-s-golf, c-h-jot-1993, j-mpcps-2003, j-s-pems-2001}) and weighted shifts on directed trees (see \cite{b-j-j-s-jmaa-2012, b-j-j-s-jmaa-2013, b-j-j-s-jmaa-2016, j-j-s-mams-2012, j-j-s-jfa-2012, j-j-s-caot-2013, j-j-s-pams-2014,p-jmaa-2016, t-jmaa-2015}), mostly in connection with the question of their subnormality. One can see that all these operators belong to a larger class of Hilbert space operators composed of unbounded weighted composition operators in $L^2$-spaces (see \cite{b-j-j-s-wco}). The class, in the bounded case, is well-known and there is extensive literature  concerning properties of its members, both in the general case as well as in the case of particular realizations like weighted shifts, composition operators, or multiplication operators (see, e.g., \cite{shi, s-m}). Weighted composition operators acting in spaces of complex-valued functions have a natural generalization in the context of vector-valued functions -- the usual complex-valued weight function is replaced by a function whose values are operators. These are the operators we call operator-weighted composition operators, we will refer to them as {\em o-wco's}. Their particular realizations are weighted shifts acting on $\ell^2$-spaces of Hilbert space-valued functions or composition operators acting on Hilbert spaces of vector-valued functions, which has already been studied (see, e.g., \cite{h-j-pams-1996, j-gmj-2004, k-pams-1984, l-bams-1971}). Interestingly, many weighted composition operators can be represented as operator weighted shifts (see \cite{c-j-jmaa-1991}). 

In this paper we turn our attention to o-wco's that act in an $\ell^2$-space of $L^2$-valued functions and have a weight function whose values are multiplication operators. We focus on the subnormality of these operators. Our work is motivated by the very recent criterion (read: sufficient condition) for the subnormality of unbounded composition operators in $L^2$-spaces (see \cite{b-j-j-s-jfa-2015}). The criterion relies on a construction of quasinormal extensions for composition operators, which is doable if the so-called consistency condition holds. It turns out that these ideas can be used in the context of o-wco's leading to the criterion for subnormality in Theorem \ref{kryterium}, which is the main result of the paper. The quasinormal extension for a composition operator built as in \cite{b-j-j-s-jfa-2015} is still a composition operator which acts over a different measure space. Interestingly, in our case for a given o-wco we get a quasinormal extension which also is an o-wco and acts over the same measure space. The extension comes from changing the set of values of a weight function. We investigate the subnormality of o-wco's in the bounded case and we show in Theorem \ref{konieczny} that the conditions appearing in our criterion are not only sufficient but also necessary in this case. Later we provide a few illustrative examples. The paper is concluded with some auxiliary results concerning commutativity of wco's and multiplication operators. 
\section{Preliminaries}
In all what follows $\zbb$, $\rbb$ and $\cbb$ stands for the sets of integers, real numbers and complex numbers, respectively; $\nbb$, $\zbb_+$ and $\rbb_+$ denotes the sets of positive integers, nonnegative integers and nonnegative real numbers, respectively. Set $\rbop = \rbb_+ \cup \{\infty\}$. If $S$ is a set and $E\subseteq S$, then $\chi_E$ is the characteristic function of $E$. Given a $\sigma$-algebra  $\varSigma$ of subsets of $S$ we denote by $\mscr(\varSigma)$ the space of all $\varSigma$-measurable $\cbb$- or $\rbop$-valued (depending on the context) functions on $S$; by writing $\mscr_+(\varSigma)$ we specify that the functions have values in $\rbop$. If $\mu$ and $\nu$ are positive measures on  $\varSigma$ and $\nu$ is absolute continuous with respect to $\mu$, then we denote this fact by writing $\nu\ll\mu$. If $Z$ is a topological space, then $\borel{Z}$ stands for the $\sigma$-algebra of all Borel subsets of $Z$. For $t\in \rbb_+$ the symbol $\delta_t$ stands for the Borel probability measure on $\rbb_+$ concentrated at $t$. If $\hh$ is a Hilbert space and $\ff$ is a subset of $\hh$, then $\lin \ff$ stands for the linear span of $\ff$.

Let $\hh$ and $\kk$ be Hilbert spaces (all Hilbert spaces considered in this paper are assumed to be complex). Then $\bsl(\hh,\kk)$ stands for the set of all linear (possibly unbounded) operators defined in a Hilbert space $\hh$ with values in a Hilbert space $\kk$. If $\hh=\kk$, then we write $\bsl(\hh)$ instead of $\bsl(\hh,\hh)$. $\bsb(\hh)$ denotes the algebra of all bounded linear operators with domain equal to $\hh$. Let $A\in \bsl(\hh,\kk)$. Denote by $\dom(A)$, $\overline{A}$ and $A^*$ the domain, the closure and the adjoint of $A$ (in case they exist). A subspace $\ee$ of $\hh$ is a core of $A$ if $\ee$ is dense in $\dom(A)$ in the graph norm $\|\cdot\|_A$ of $A$; recall that $\|f\|_A^2:=\|Af\|^2+\|f\|^2$ for $f\in\dom(A)$. If $A$ and $B$ are operators in $\hh$ such that $\dom(A)\subseteq \dom(B)$ and $Af=Bf$ for every $f\in\dom(A)$, then we write $A\subseteq B$. A closed densely defined operator $N$ in $\hh$ is said to be {\em normal} if $N^*N=NN^*$. A densely defined operator $S$ in $\hh$ is said to be {\em subnormal} if there exists a Hilbert space $\kk$ and a normal operator $N$ in $\kk$ such that $\hh \subseteq \kk$ (isometric embedding) and $Sh = Nh$ for all $h \in \dom(S)$. A closed densely defined operator $A$ in $\hh$ is {\em quasinormal} if and only if $U|A|\subseteq |A|U$, where $|A|$ is the modulus of $A$ and $A=U|A|$ is the polar decomposition of $A$. It was recently shown (cf. \cite{j-j-s1}) that:
\begin{align} \label{QQQ}
\begin{minipage}{85ex}
A closed densely defined operator $Q$ is quasinormal if and only if $Q Q^*Q = Q^* Q Q$.
\end{minipage}
\end{align}
It is well-known that quasinormal operators are subnormal (cf.\ \cite{bro,sto-sza2}).

Throughout the paper $X$ stands for a countable set. Let $\mu$ be a discrete measure on $X$, i.e., $\mu$ is a measure on $2^X$, the power set of $X$, such that $0<\mu_x:=\mu(\{x\})<\infty$ for every $x\in X$. Let $\hhb=\{\hh_x\colon x\in X\}$ be a family of (complex) Hilbert spaces. Then $\ell^2(\hhb, \mu)$ denotes the Hilbert space of all sequences ${\boldsymbol{f}}=\{f_x\}_{x\in X}$ such that $f_x\in\hh_x$ for every $x\in X$ and $\sum_{x\in X}||f_x||_{\hh_x}^2\mu_x<\infty$. For brevity, if this leads to no confusion, we suppress the dependence of the norm $\|\cdot\|_{\hh_x}$ on $\hh_x$ and write just $\|\cdot\|$. If $\mu$ is the counting measure on $X$, then we denote $\ell^2(\hhb,\mu)$ by $\ell^2(\hhb)$.

Here and later on we adhere to the notation that all the sequences, families or systems indexed by a set $X$ will be denoted by bold symbols while the members will be written with normal ones.

Let $X$ be a countable set, $\mu$ be a discrete measure on $X$, $\phi$ be a self-map of $X$,
$\hhb=\{\hh_x\colon x\in X\}$ be a family of Hilbert spaces and $\varLambdab=\{\varLambda_x\colon x\in X\}$ be a family of operators such that $\varLambda_x\in\bsl(\hh_{\phi(x)},\hh_x)$ for every $x\in X$. We say that $(X,\phi,\mu,\hhb,\varLambdab)$ is {\em admissible} then. By saying that $(X,\phi,\hhb,\varLambdab)$ is admissible we mean that $(X,\phi,\nu,\hhb,\varLambdab)$ is admissible where $\nu$ is the counting measure. Denote by $\dom_{\varLambdab}$ the set of all $\boldsymbol{f}\in \ell^2(\hhb,\mu)$ such that $f_y\in\bigcap_{z\in\phi^{-1}(\{y\})}\dom(\varLambda_z)$ for every $y\in\phi(X)$, i.e., $f_{\phi(x)}\in \dom(\varLambda_x)$ for every $x\in X$. Then an {\em operator-weighted composition operator} in $\ell^2(\hhb,\mu)$ induced by $\phi$ and $\varLambdab$
is the operator
    \begin{align*}
    \cfl \colon \ell^2(\hhb,\mu)\supseteq\dom\big(\cfl\big)\to \ell^2(\hhb,\mu)
    \end{align*}
defined according to the following formula
\allowdisplaybreaks
    \begin{gather*}
    \dom\big(\cfl\big)=\Big\{\bsf\in\dom_{\varLambdab} \colon \quad\sum_{x\in X} \big\|\varLambda_x f_{\phi(x)}\big\|_{\hh_x}^2\mu_x<\infty \Big\},\\
    \big(\cfl \bsf\big)_x= \varLambda_{x}f_{\phi(x)} ,\quad x\in X,\ \bsf \in \dom\big(\cfl\big).
   \end{gather*}
\begin{rem}\label{wco}
In case every $\hh_x$ is equal to $\cbb$ and $\varLambda_x$ is just multiplying by a complex number $w_x$, $\cfl$ is the classical {\em weighted composition operator} $C_{\phi, w}$ induced by $\phi$ and the weight function $w\colon X\to \cbb$ given by $w(x)=w_x$, and acting in the $L^2(\mu):=L^2(X,2^X,\mu)$. More precisely, $C_{\phi, w}\colon L^2(\mu)\supseteq\dom(C_{\phi, w})\to  L^2(\mu)$ is defined by
\begin{gather*}
\dom(C_{\phi, w})=\big\{f\in L^2(\mu) \colon w\cdot(f\circ\phi)\in L^2(\mu) \big\},\\ 
C_{\phi, w} f=w\cdot (f\circ\phi),\quad f\in \dom(C_{\phi, w}).
\end{gather*}
For more information on (unbounded) weighted composition operators in $L^2$-spaces we refer the reader to \cite{b-j-j-s-wco}.
\end{rem}
It is clear that the operator $U\colon \ell^2(\hhb,\mu)\to \ell^2(\hhb)$ given by
\begin{align}\label{kawa}
(U\bsf)_x=\sqrt{\mu_x}f_x,\quad x\in X,\ \bsf\in\ell^2(\hhb,\mu), \end{align}
is unitary. Using this operator one can show that the operator $\cfl$ in $\ell^2(\hhb,\mu)$ is in fact unitarily equivalent to an o-wco acting in $\ell^2(\hhb)$.
\begin{prop}\label{liczaca}
Let $(X,\phi,\mu,\hhb,\varLambdab)$ be admissible. Then $\cfl$ in $\ell^2(\hhb,\mu)$ is unitarily equivalent to $C_{\phi,\varLambdab^\prime}$ in $\ell^2(\hhb)$ with $\varLambdab^\prime=\Big\{\sqrt{\frac{\mu_x}{\mu_{\phi(x)}}}\varLambda_x \colon x\in X\Big\}$ via $U$ defined by \eqref{kawa}.
\end{prop}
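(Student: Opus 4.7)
The plan is to verify the three standard ingredients of a unitary equivalence: that $U$ is indeed unitary, that $U$ maps $\dom(\cfl)$ bijectively onto $\dom(C_{\phi,\varLambdab^\prime})$, and that $U\cfl = C_{\phi,\varLambdab^\prime} U$ on this common domain (after transport by $U$). I would say only a word about the unitarity, as the paper already calls it clear: for $\bsf\in\ell^2(\hhb,\mu)$ we have $\sum_x\|(U\bsf)_x\|^2 = \sum_x\mu_x\|f_x\|^2 = \|\bsf\|^2$, and the map $\bsg\mapsto \{\mu_x^{-1/2}g_x\}_{x\in X}$ is the two-sided inverse, so $U$ is a Hilbert space isomorphism from $\ell^2(\hhb,\mu)$ onto $\ell^2(\hhb)$.

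Next I would analyze domains. Since each $\sqrt{\mu_x}$ is a nonzero scalar, $(U\bsf)_{\phi(x)}=\sqrt{\mu_{\phi(x)}}\,f_{\phi(x)}$ lies in $\dom(\varLambda_x)=\dom\!\big(\sqrt{\mu_x/\mu_{\phi(x)}}\,\varLambda_x\big)$ if and only if $f_{\phi(x)}\in\dom(\varLambda_x)$. Thus $\bsf\in\dom_{\varLambdab}$ iff $U\bsf\in\dom_{\varLambdab^\prime}$. The summability condition transports equally cleanly: for $\bsf\in\dom_{\varLambdab}$,
\begin{equation*}
\sum_{x\in X}\Big\|\sqrt{\tfrac{\mu_x}{\mu_{\phi(x)}}}\,\varLambda_x(U\bsf)_{\phi(x)}\Big\|^2
=\sum_{x\in X}\tfrac{\mu_x}{\mu_{\phi(x)}}\cdot\mu_{\phi(x)}\|\varLambda_x f_{\phi(x)}\|^2
=\sum_{x\in X}\|\varLambda_x f_{\phi(x)}\|^2\mu_x,
\end{equation*}
which shows $\bsf\in\dom(\cfl)$ iff $U\bsf\in\dom(C_{\phi,\varLambdab^\prime})$. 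By the same argument applied to $U^{-1}$, $U$ restricts to a bijection $\dom(\cfl)\to\dom(C_{\phi,\varLambdab^\prime})$.

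Finally, for the intertwining, I would pick $\bsf\in\dom(\cfl)$ and compute coordinate-wise:
\begin{equation*}
(U\cfl\bsf)_x=\sqrt{\mu_x}\,\varLambda_x f_{\phi(x)}
=\sqrt{\tfrac{\mu_x}{\mu_{\phi(x)}}}\,\varLambda_x\bigl(\sqrt{\mu_{\phi(x)}}\,f_{\phi(x)}\bigr)
=\bigl(C_{\phi,\varLambdab^\prime}U\bsf\bigr)_x,\quad x\in X.
\end{equation*}
Combined with the domain identification, this yields $U\cfl U^{-1}=C_{\phi,\varLambdab^\prime}$, which is the required unitary equivalence. There is no real obstacle here; the only point that requires a moment's care is keeping track of where the factor $\sqrt{\mu_x/\mu_{\phi(x)}}$ should land, and checking that inserting and removing the scalar $\sqrt{\mu_{\phi(x)}}$ does not alter membership in $\dom(\varLambda_x)$, which is automatic since it is a nonzero scalar.
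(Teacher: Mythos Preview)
Your proof is correct and is exactly the routine verification the paper has in mind; the paper states the proposition without proof, treating the unitary equivalence as clear from the definition of $U$, and your coordinate-wise computation of domains and the intertwining relation $U\cfl=C_{\phi,\varLambdab^\prime}U$ is the natural way to fill in the details.
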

The above enables us to restrict ourselves to studying o-wco's in exclusively in $\ell^2(\hhb)$ in all considerations that follow. 

The following convention is used in the paper: if $(X,\hhb, \phi,\varLambdab)$ is admissible, $\mathcal{P}$ is a property of Hilbert space operators, then we say that $\varLambdab$ satisfies $\mathcal{P}$ if and only if $\varLambda_x$ satisfies $\mathcal{P}$ for every $x\in X$.
\begin{lem}\label{closed}
Let $(X,\phi,\hhb,\varLambdab)$ be admissible. Then $\cfl$ in $\ell^2(\hhb)$ is closed whenever $\varLambdab$ is closed.
\end{lem}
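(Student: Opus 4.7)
The plan is to verify closedness directly by showing that the graph of $\cfl$ is sequentially closed in $\ell^2(\hhb)\oplus\ell^2(\hhb)$. So I take a sequence $\{\bsf^{(n)}\}_{n\in\nbb} \subseteq \dom(\cfl)$ such that $\bsf^{(n)}\to\bsf$ in $\ell^2(\hhb)$ and $\cfl\bsf^{(n)}\to\bsg$ in $\ell^2(\hhb)$, and aim to show $\bsf\in\dom(\cfl)$ with $\cfl\bsf=\bsg$.

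The first key observation I would use is that convergence in $\ell^2(\hhb)$ (with respect to the counting measure) implies coordinatewise convergence: for every $y\in X$, $\|f_y^{(n)}-f_y\|_{\hh_y}^2\Le\|\bsf^{(n)}-\bsf\|^2\to 0$, and analogously $\varLambda_x f_{\phi(x)}^{(n)}=(\cfl\bsf^{(n)})_x\to g_x$ in $\hh_x$ for every $x\in X$. In particular, for each fixed $x\in X$, setting $y=\phi(x)$ yields $f_{\phi(x)}^{(n)}\to f_{\phi(x)}$ in $\hh_{\phi(x)}$.

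Next I would invoke the hypothesis that each $\varLambda_x$ is closed: since $f_{\phi(x)}^{(n)}\in\dom(\varLambda_x)$, $f_{\phi(x)}^{(n)}\to f_{\phi(x)}$, and $\varLambda_x f_{\phi(x)}^{(n)}\to g_x$, closedness of $\varLambda_x$ gives $f_{\phi(x)}\in\dom(\varLambda_x)$ and $\varLambda_x f_{\phi(x)}=g_x$. As $x\in X$ was arbitrary, this shows $\bsf\in\dom_{\varLambdab}$ and $(\cfl\bsf)_x=g_x$ for all $x$. Finally, $\sum_{x\in X}\|\varLambda_x f_{\phi(x)}\|^2=\sum_{x\in X}\|g_x\|^2=\|\bsg\|^2<\infty$, which confirms that $\bsf\in\dom(\cfl)$ and $\cfl\bsf=\bsg$.

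There is no real obstacle here; the proof is essentially bookkeeping that propagates coordinatewise closedness (of each $\varLambda_x$) to global closedness (of $\cfl$), using that the $\ell^2(\hhb)$-topology dominates the pointwise topology on each fiber. The only subtlety worth noting is the use of the counting measure (Proposition~\ref{liczaca} permits this reduction), which ensures the crucial pointwise-convergence step; without it, one would have to argue along a subsequence at each $y$ and a diagonal extraction, but with counting measure the estimate $\|f_y^{(n)}-f_y\|^2\Le\|\bsf^{(n)}-\bsf\|^2$ is immediate.
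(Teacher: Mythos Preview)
Your proof is correct and follows exactly the same line as the paper's own proof: pass from $\ell^2(\hhb)$-convergence to coordinatewise convergence, apply the closedness of each $\varLambda_x$ to obtain $f_{\phi(x)}\in\dom(\varLambda_x)$ and $\varLambda_x f_{\phi(x)}=g_x$, and conclude $\bsf\in\dom(\cfl)$ with $\cfl\bsf=\bsg$. The only difference is that you spell out the finiteness of $\sum_x\|\varLambda_x f_{\phi(x)}\|^2$ and the counting-measure justification for pointwise convergence more explicitly than the paper does.
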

\begin{proof}
Suppose that $\varLambda_x$ is closed for every $x\in X$. Take a sequence $\{\bsf^{(n)}\}_{n=1}^\infty\subseteq \dom(\cfl)$ such that $\bsf^{(n)}\to \bsf\in\ell^2(\hhb)$ and $\cfl \bsf^{(n)}\to \bsg\in\ell^2(\hhb)$ as $n\to\infty$. Clearly, for every $x\in X$, $f^{(n)}_x\to f_x$ and $\big(\cfl \bsf^{(n)}\big)_x\to g_x$ as $n\to\infty$. The latter implies that $\varLambda_{x}f^{(n)}_{\phi(x)}\to g_x$ as $n\to \infty$ for every $x\in X$. Since all $\varLambda_x$'s are closed we see that for every $x\in X$, $f_{\phi(x)}\in\dom(\varLambda_{x})$ and $\varLambda_{x} f_{\phi(x)}=g_x$. This yields $\bsf\in\dom(\cfl)$ and $\cfl \bsf=\bsg$.
\end{proof}
The reverse implication does not hold in general.
\begin{exa}
Let $X=\{-1,0,1\}$. Let $\phi\colon X\to X$ be the transformation given by $\phi(-1)=\phi(1)=0$ and $\phi(0)=0$. Let $\hhb=\{\hh_x\colon x\in X\}$ be a set of Hilbert spaces such that $\hh_1\subseteq \hh_{-1}$, and $\varLambdab=\big\{\varLambda_x \in \bsl(\hh_0,\hh_x)\colon  x\in X\big\}$ be a set of operators such that $\varLambda_0$ and $\varLambda_{1}$ are closed, $\varLambda_{-1}$ is not closed, and $\varLambda_1\subseteq\varLambda_{-1}$. Then $(X,\phi,\hhb,\varLambdab)$ is admissible. Let $\cfl$ be the o-wco in $\ell^2(\hhb)$ induced by $\phi$ and $\varLambdab$. Let $\{\bsf^{(n)}\}_{n=1}^\infty$ be a sequence in $\dom(\cfl)$ such that $\bsf^{(n)}\to \bsf\in\ell^2(\hhb)$ and $\cfl \bsf^{(n)}\to \bsg\in\ell^2(\hhb)$ as $n\to\infty$. As in the proof of Lemma \ref{closed} we see that $\varLambda_{x}f^{(n)}_0\to g_x$ for every $x\in X$. Since $\varLambda_1\subseteq\varLambda_{-1}$ we get $g_{-1}=g_1$. Closedness of $\varLambda_1$ implies that $f_0\in \dom(\varLambda_1)\subseteq\dom(\varLambda_{-1})$ and $\varLambda_{-1} f_0=\varLambda_1 f_0=g_1=g_{-1}$. For $x=0$ we can argue as in the proof of Lemma \ref{closed} to show that $f_0\in\dom(\varLambda_0)$ and $\varLambda_xf_0=g_0$. These facts imply that $f\in\dom(\cfl)$ and $\cfl f=g$, which shows that $\cfl$ is closed. 
\end{exa}
%
Some properties of the operator $\cfl$ can be deduced by investigating operators 
\begin{align*}
\cflx\colon \hh_x\supseteq \dom(\cflx) \to \ell^2\big(\hhb^x\big),\quad x\in \phi(X),
\end{align*}
with $\hhb^x=\{\hh_y\colon y\in\phi^{-1}(\{x\})\}$, which are defined by
\begin{gather*}
\dom(\cflx) = \Big\{f\in \hh_x\colon f\in \bigcap_{y\in\phi^{-1}(\{x\})}\dom(\varLambda_{y})\text{ and } \sum_{y\in\phi^{-1}(\{x\})}\|\varLambda_{y}f\|^2<\infty\Big\}\\
\big(\cflx f\big)_y=\varLambda_{y}f,\quad y\in\phi^{-1}(\{x\}),\ f\in\dom(\cflx).
\end{gather*}
\begin{prop}\label{ortsumdense}
Let $(X,\phi,\hhb,\varLambdab)$ be admissible. Then $\cfl$ is a densely defined operator in $\ell^2(\hhb)$ if and only if  $\cflx$ is a densely defined operator for every $x\in \phi(X)$.
\end{prop}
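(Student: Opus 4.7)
The plan rests on an equivalent, fiberwise description of $\dom(\cfl)$. For $\bsf\in\ell^2(\hhb)$, grouping the defining sum by the value $y=\phi(x)$ yields
\begin{equation*}
\sum_{x\in X}\|\varLambda_x f_{\phi(x)}\|^2 = \sum_{y\in\phi(X)}\sum_{x\in\phi^{-1}(\{y\})}\|\varLambda_x f_y\|^2,
\end{equation*}
and the inclusion conditions defining $\dom_{\varLambdab}$ regroup the same way. It follows that $\bsf\in\dom(\cfl)$ if and only if $f_y\in\dom(C_{\phi,\varLambdab,y})$ for every $y\in\phi(X)$ and $\sum_{y\in\phi(X)}\|C_{\phi,\varLambdab,y}f_y\|^2<\infty$; the components $f_y$ with $y\notin\phi(X)$ are unconstrained beyond $\bsf\in\ell^2(\hhb)$. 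Both implications follow from this characterization together with the orthogonal decomposition $\ell^2(\hhb)=\bigoplus_{x\in X}\hh_x$.

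For the implication $(\Leftarrow)$ I would first note that any finitely supported $\bsf\in\ell^2(\hhb)$ whose $y$-components with $y\in\phi(X)$ lie in $\dom(C_{\phi,\varLambdab,y})$ belongs to $\dom(\cfl)$, because the outer sum then reduces to finitely many finite terms. Given $\bsg\in\ell^2(\hhb)$ and $\varepsilon>0$, I would truncate $\bsg$ to a sufficiently large finite support so that the discarded tail has norm $<\varepsilon/2$, and then, invoking density of each $\dom(C_{\phi,\varLambdab,y})$ in $\hh_y$, perturb each surviving component $g_y$ with $y\in\phi(X)$ by a sufficiently small amount to land in $\dom(C_{\phi,\varLambdab,y})$, leaving the components with $y\notin\phi(X)$ unchanged. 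The resulting vector lies in $\dom(\cfl)$ and approximates $\bsg$ within $\varepsilon$.

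For the converse, fix $y_0\in\phi(X)$ and $g\in\hh_{y_0}$, and form the vector $\bsg\in\ell^2(\hhb)$ whose $y_0$-coordinate equals $g$ and whose other coordinates vanish. By density of $\dom(\cfl)$, choose $\bsf^{(n)}\in\dom(\cfl)$ with $\bsf^{(n)}\to\bsg$ in $\ell^2(\hhb)$. Since $\|f_{y_0}^{(n)}-g\|\Le\|\bsf^{(n)}-\bsg\|\to 0$ and each $f_{y_0}^{(n)}$ lies in $\dom(C_{\phi,\varLambdab,y_0})$ by the fiberwise description of $\dom(\cfl)$, the subspace $\dom(C_{\phi,\varLambdab,y_0})$ is dense in $\hh_{y_0}$.

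I do not anticipate any real obstacle: once the fiberwise reformulation of $\dom(\cfl)$ is recognised, both implications reduce to standard coordinatewise approximation arguments on an orthogonal direct sum.
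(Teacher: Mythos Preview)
Your proof is correct and follows essentially the same approach as the paper: both rely on the fiberwise description of $\dom(\cfl)$ and approximate by finitely supported vectors whose components lie in the appropriate $\dom(C_{\phi,\varLambdab,y})$. The only cosmetic difference is that the paper casts the ``if'' direction as a contradiction argument (and perturbs before truncating), whereas you argue directly (truncating first, then perturbing); the underlying idea is identical.
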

\begin{proof}
Suppose that for every $x\in \phi(X)$, $\cflx$ is densely defined while $\cfl$ is not. Then there exists $\bsf\in\ell^2(\hhb)$ and $r>0$ such that $\mathbb{B}(\bsf,r)\cap \dom(\cfl)=\varnothing$, where $\mathbb{B}(\bsf,r)$ denotes the open ball in $\ell^2(\hhb)$ with center $\bsf$ and radius $r$. Since all $\cflx$'s are densely defined, we may assume that $f_x\in\dom(\cflx)$ for every $x\in X$. From $\sum_{x\in X}\|f_x\|^2<\infty$ we deduce that there exists a finite set $Y\subseteq X$ such that $\sum_{x\in X\setminus Y}\|f_x\|^2<\frac{r^2}{2}$. Then $\bsg\in\ell^2(\hhb)$ given by $g_x=\chi_Y(x) f_x$ belongs to $\mathbb{B}(\bsf,r)$. Moreover, $\sum_{x \in X} \| \varLambda_x g_{\phi(x)}\|^2 \Le  \sum_{x \in X} \| \varLambda_x f_{\phi(x)}\|^2 < \infty$, which shows that $\bsg \in \dom(\cfl)$. This contradiction proves the ''if'' part. The ''only if'' part follows from the fact that $f_x \in \dom(\cflx)$ for every $x \in \phi(X)$ whenever $ \bsf \in \dom(\cfl)$.
\end{proof}
\begin{rem}
It is worth mentioning that if $\varLambdab$ is closed, then $\cfl$ is unitarily equivalent to the orthogonal sum of operators $\cflx$, $x\in X$ (this can be shown by using a version of \cite[Theorem 5, p. 81]{b-s}). This leads to another proof of Proposition \ref{ortsumdense} in case $\varLambdab$ is closed.
\end{rem}
In the course of our study we use frequently multiplication operators. Below we set the notation and introduce required terminology concerning these operators. Suppose $\{(\varOmega_x,\ascr_x,\mu_x)\colon x\in X\}$ and $\{(\varOmega_x,\ascr_x,\nu_x)\colon x\in X\}$ are families of $\sigma$-finite measure spaces. Let $\varGammab=\{\varGamma_x\colon x\in X\}$ with $\varGamma_x\in\mscr(\ascr_x)$ for $x\in X$. Assume that $|\varGamma_x|^2\nu_x\ll\mu_x$ for every $x\in X$. Let $\hhb=\{L^2(\mu_x)\colon x\in X\}$ and $\kkb=\{L^2(\nu_x)\colon x\in X\}$. Then $M_{\varGammab}\colon \ell^2(\hhb)\ni\dom(M_{\varGammab})\to \ell^2(\kkb)$, {\em the operator of multiplication by $\varGammab$}, is given by
\begin{gather*}
\dom(M_{\varGammab})=\big\{\bsf\in \ell^2(\hhb)\colon \varGamma_yf_y\in \kk_y \text { for every } y\in X \text{ and } \sum_{x\in X}\|\varGamma_x f_x\|_{\kk_x}^2<\infty\big\},\\
\big(M_{\varGammab} \bsf\big)_x = \varGamma_x f_x,\quad x\in X,\ \bsf\in \dom(M_{\varGammab}).
\end{gather*}
Clearly, $M_{\varGammab}$ is well-defined. Note that the above definition agrees with the definition of classical multiplication operators if $X$ is a one-point set $\{x_0\}$ and $\mu_{x_0}=\nu_{x_0}$. 
Below we show when a multiplication operator $M_{\varGammab}$ is closed (since our setting in not entirely classical we give a short proof).
\begin{lem} \label{poswietach-1}
Let $x \in X$. If $\frac{\D|\varGamma_x|^2\nu_x}{\D\mu_x}<\infty$ a.e. $[\mu_x]$, then $M_{\varGamma_x}\colon L^2(\mu_x) \to L^2(\nu_x)$ is densely defined and closed.
\end{lem}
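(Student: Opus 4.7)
The plan is to treat the two conclusions separately, exploiting the Radon-Nikodym derivative $h_x := \frac{\D|\varGamma_x|^2\nu_x}{\D\mu_x}$ to handle density, and using pointwise subsequential convergence to handle closedness.

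For density, I would truncate on the level sets where $h_x$ is bounded. Set $E_n := \{t\colon h_x(t) \Le n\}$. Since $h_x < \infty$ a.e.\ $[\mu_x]$, we have $\mu_x(\varOmega_x\setminus \bigcup_n E_n)=0$. For any $f\in L^2(\mu_x)$, the truncation $f_n := f\chi_{E_n}$ satisfies
\[
\int |\varGamma_x f_n|^2\,\D\nu_x = \int_{E_n}|f|^2\,h_x\,\D\mu_x \Le n\,\|f\|_{L^2(\mu_x)}^2 < \infty,
\]
so $f_n\in\dom(M_{\varGamma_x})$. By dominated convergence $f_n\to f$ in $L^2(\mu_x)$, proving density.

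For closedness, let $\{f_n\}\subseteq \dom(M_{\varGamma_x})$ with $f_n\to f$ in $L^2(\mu_x)$ and $\varGamma_xf_n\to g$ in $L^2(\nu_x)$. Pass to a subsequence so that $f_n\to f$ a.e.\ $[\mu_x]$ and $\varGamma_xf_n\to g$ a.e.\ $[\nu_x]$. The key step is to translate the first pointwise statement into a $\nu_x$-a.e.\ statement. Let $N$ be the $\mu_x$-null set where the first convergence fails. Then $|\varGamma_x|^2\nu_x(N)=0$ by the hypothesis $|\varGamma_x|^2\nu_x\ll\mu_x$, and since $|\varGamma_x|^2>0$ on $\{\varGamma_x\neq 0\}$, this forces $\nu_x(N\cap\{\varGamma_x\neq 0\})=0$. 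Therefore $\varGamma_xf_n\to\varGamma_xf$ holds $\nu_x$-a.e.\ (trivially on $\{\varGamma_x=0\}$, and off the $\nu_x$-null set above on $\{\varGamma_x\neq 0\}$). Comparing the two a.e.\ limits gives $\varGamma_xf=g$ in $L^2(\nu_x)$, which means $f\in \dom(M_{\varGamma_x})$ and $M_{\varGamma_x}f=g$.

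The main obstacle is the bookkeeping between the three measures $\mu_x$, $\nu_x$ and $|\varGamma_x|^2\nu_x$: $\nu_x$ itself need not be absolutely continuous with respect to $\mu_x$, so one cannot directly lift a.e.\ $[\mu_x]$ convergence to a.e.\ $[\nu_x]$ convergence. The workaround, as above, is that multiplication by $\varGamma_x$ kills the zero set of $\varGamma_x$, so only the behavior on $\{\varGamma_x\neq 0\}$ matters, and there the one-sided absolute continuity $|\varGamma_x|^2\nu_x\ll\mu_x$ is precisely enough.
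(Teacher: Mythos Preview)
Your proof is correct. Both conclusions are established by sound arguments, and the careful handling of the three measures in the closedness step is fine: extracting a common subsequence and using $|\varGamma_x|^2\nu_x\ll\mu_x$ to transfer $\mu_x$-a.e.\ convergence to $\nu_x$-a.e.\ convergence on $\{\varGamma_x\neq 0\}$ is exactly what is needed.

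Your route differs from the paper's, however. The paper observes, via the Radon--Nikodym theorem, that
\[
\dom(M_{\varGamma_x})=L^2\big((1+h_x)\,\D\mu_x\big)
\]
and that the graph norm of $M_{\varGamma_x}$ coincides with the norm of this $L^2$-space. Density then follows from a cited lemma (that $L^2((1+h_x)\D\mu_x)$ is dense in $L^2(\mu_x)$ when $h_x<\infty$ a.e.), and closedness follows in one line from completeness of $L^2((1+h_x)\D\mu_x)$. Your truncation argument for density is essentially a direct, self-contained proof of that cited lemma, so the two approaches agree in spirit there. Your closedness argument is genuinely different: instead of recognizing the graph norm as a complete $L^2$-norm, you work sequentially and track a.e.\ limits across measures. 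The paper's approach is shorter and more conceptual; yours is more elementary, avoids external references, and makes explicit why the partial absolute continuity suffices.
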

\begin{proof}
Set $h_x:=\frac{\D|\varGamma_x|^2\nu_x}{\D\mu_x}$. Applying the Radon-Nikodym theorem we obtain
\begin{align}\label{dzmnozenie}
\dom(M_{\varGamma_x})=L^2\big((1+h_x) \D\mu_x\big).
\end{align}
Since, by \cite[Lemma 12.1]{b-j-j-s-ampa-2014}, $L^2\big((1+h_x) \D\mu_x\big)$ is dense in $L^2(\mu_x)$, \eqref{dzmnozenie} implies that $M_{{\varGamma_x}}$ is densely defined. For every $f\in\dom(M_{\varGamma_x})$ the graph norm of $f$ equals $\int_{\varOmega_x} |f|^2(1+h_x)\D\mu_x$. Thus $\dom(M_{\varGamma_x})$ is complete in the graph norm of $M_{\varGamma_x}$, which proves that $M_{\varGamma_x}$ is closed (see \cite[Theorem 5.1]{wei}). 
\end{proof}
It is easily seen that $M_{\varGammab}$ is the orthogonal sum of $M_{\varGamma_y}$'s, which yields the aforementioned fact.
\begin{cor}\label{poswietach}
Suppose $\frac{\D|\varGamma_y|^2\nu_y}{\D\mu_y}<\infty$ a.e. $[\mu_y]$ for all $y\in X$. Then $M_{\varGammab}$ is densely defined and closed.
\end{cor}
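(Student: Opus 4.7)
The plan is to exploit the decomposition of $\ell^2(\hhb)$ and $\ell^2(\kkb)$ as orthogonal direct sums $\bigoplus_{y\in X} L^2(\mu_y)$ and $\bigoplus_{y\in X} L^2(\nu_y)$, respectively. Under the natural identification $\bsf = \{f_y\}_{y\in X}$ with $\bigoplus_{y \in X} f_y$, the definition of $M_{\varGammab}$ shows directly that a sequence $\bsf$ lies in $\dom(M_{\varGammab})$ if and only if $f_y \in \dom(M_{\varGamma_y})$ for every $y \in X$ and $\sum_{y\in X}\|M_{\varGamma_y} f_y\|^2_{L^2(\nu_y)} < \infty$, and that the action is coordinatewise. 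This is exactly the definition of the orthogonal sum $\bigoplus_{y\in X} M_{\varGamma_y}$, so $M_{\varGammab} = \bigoplus_{y\in X} M_{\varGamma_y}$.

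Having established the decomposition, I would apply Lemma \ref{poswietach-1} to each summand: the hypothesis $\frac{\D|\varGamma_y|^2\nu_y}{\D\mu_y}<\infty$ a.e.\ $[\mu_y]$ for every $y \in X$ guarantees that each $M_{\varGamma_y}\colon L^2(\mu_y) \to L^2(\nu_y)$ is densely defined and closed. Density of $\dom(M_{\varGammab})$ in $\ell^2(\hhb)$ then follows from the fact that finitely supported sequences $\bsf$ with each $f_y$ taken from the dense subspace $\dom(M_{\varGamma_y}) \subseteq L^2(\mu_y)$ approximate every element of $\ell^2(\hhb)$.

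For closedness, I would argue as in the proof of Lemma \ref{closed}: if $\bsf^{(n)} \to \bsf$ in $\ell^2(\hhb)$ with $\bsf^{(n)} \in \dom(M_{\varGammab})$ and $M_{\varGammab}\bsf^{(n)} \to \bsg$ in $\ell^2(\kkb)$, then coordinatewise $f_y^{(n)} \to f_y$ in $L^2(\mu_y)$ and $M_{\varGamma_y} f_y^{(n)} \to g_y$ in $L^2(\nu_y)$ for every $y \in X$. Closedness of each $M_{\varGamma_y}$ yields $f_y \in \dom(M_{\varGamma_y})$ and $M_{\varGamma_y} f_y = g_y$, while Fatou's lemma (or lower semicontinuity of the $\ell^2$-norm) applied to $\sum_{y\in X}\|M_{\varGamma_y}f_y\|^2_{L^2(\nu_y)}$ gives the summability required for $\bsf \in \dom(M_{\varGammab})$, together with $M_{\varGammab}\bsf = \bsg$.

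There is no real obstacle here; the argument is a routine verification that an orthogonal sum of densely defined closed operators is itself densely defined and closed. The only point requiring slight care is ensuring that the $\ell^2$-summability condition defining $\dom(M_{\varGammab})$ interacts correctly with the coordinatewise convergence, which is handled by a standard Fatou-type argument.
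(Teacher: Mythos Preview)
Your proposal is correct and follows exactly the approach indicated in the paper: the sentence immediately preceding the corollary states that $M_{\varGammab}$ is the orthogonal sum of the $M_{\varGamma_y}$'s, and the corollary is then deduced from Lemma~\ref{poswietach-1}. Your write-up simply fills in the routine details of why an orthogonal sum of densely defined closed operators is densely defined and closed; the only superfluous step is the appeal to Fatou for closedness, since once $M_{\varGamma_y}f_y = g_y$ for all $y$ the summability $\sum_y \|M_{\varGamma_y}f_y\|^2 = \|\bsg\|^2 < \infty$ is automatic from $\bsg \in \ell^2(\kkb)$.
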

It is well-known that classical multiplication operator is selfadjoint if the multiplying function is $\overline{\rbb}$-valued. One can show that the same applies to $M_{\varGammab}$, i.e., $M_{\varGammab}$ is selfadjoint if $\varGammab$ is $\overline{\rbb}$-valued (of course, assuming dense definiteness of $M_{\varGammab}$).
\section{The criterion}
In this section we provide a criterion for the subnormality of an o-wco $\cfl$ with  $\varLambdab$ being a family of multiplication operators acting in a common $L^2$-space. More precisely, we will assume that 
\begin{align}\label{zemanek}
\begin{minipage}{88ex}
$X$ is a countable set, $\phi$ is a self-map of $X$, $(W, \ascr, \varrho)$ is a $\sigma$-finite measure space, $\lambdab=\{\lambda_x\}_{x\in X}\subseteq \mscr(\ascr)$,   $\hhb=\{\hh_x\colon x\in X\}$, with $\hh_x=L^2(\varrho)$, and $\varLambdab=\{M_{\lambda_x}\colon x\in X\}$, where $M_{\lambda_x}\colon L^2(\varrho) \supseteq\dom(M_{\lambda_x})\to L^2(\varrho)$ is the operator of multiplication by $\lambda_x$.
\end{minipage}
\end{align}
The criterion for subnormality we are aiming for will rely on a well-known measure-theoretic construction of a measure from a measurable family of probability measures, which has already been used in the context of subnormality (see \cite{b-j-j-s-jfa-2015,lam-1988-mmj}). Using it we will build an extension for $\cfl$. To this end we consider a measurable space $(S,\varSigma)$ and a family $\{\vartheta_x^w\colon x\in X, w\in W\}$ of probability measures on $\varSigma$ satisfying the following conditions:
\begin{itemize}
    \item[(\texttt{A})] for all $x\in X$ and $\sigma\in\varSigma$ the map $W\ni w\mapsto \vartheta^w_x(\sigma)\in[0,1]$ is $\ascr$-measurable,
    \item[(\texttt{B})] for all $x\in X$ and $w\in W$, $|\lambda_x(w)|^2\vartheta_x^w\ll \vartheta_{\phi(x)}^w$.
    \end{itemize}
By $(\texttt{A})$, for every $x\in X$ the formula
    \begin{align}\label{miararoz}
    \widehat\varrho_x(A\times\sigma)=
    \int_W \int_S \chi_{A\times\sigma}(w,s)\D\vartheta^w_x(s)\D\varrho(w),
    \quad A\times\sigma\in\ascr\otimes\varSigma,
    \end{align}
where $\ascr\otimes\varSigma$ denotes the $\sigma$-algebra generated by the family $\{A\times\sigma\colon A\in \ascr, \sigma\in\varSigma\}$, defines a $\sigma$-finite measure $\widehat\varrho_x$ on $\ascr\otimes\varSigma$ (cf. \cite[Theorem 2.6.2]{ash}) which satisfies
    \begin{align}\label{normaroz}
    \int_{W\times S} F(w,s)\D\widehat\varrho_x(w,s)
        =
    \int_W \int_S F(w,s)\D\vartheta^w_x(s)\D\varrho(w), \quad F\in \mscr_+(\ascr\otimes\varSigma).
    \end{align}
We first show that the measures in the family $\{\widehat\varrho_x\colon x\in X\}$ satisfy similar absolute continuity condition as the measures in the family $\{\vartheta_x^w\colon x\in X, w\in W\}$, and that the Radon-Nikodym derivatives coming from the former family can be written in terms of the Radon-Nikodym derivatives coming from the latter one. For $x\in X$, let $\widehat\lambda_x\in\mscr(\ascr\otimes\varSigma)$ be given by $\widehat\lambda_x(w,s)=\lambda_x(w)$ for $(w,s)\in W\times S$.
\begin{lem}\label{warunekC}
Assume \eqref{zemanek}. Let $(S,\varSigma)$ be a measurable space and $\{\vartheta_x^w\colon x\in X, w\in W\}$ be a family of probability measures on $\varSigma$ satisfying conditions {\em (\texttt{A})} and {\em (\texttt{B})}. Then the following conditions hold$:$
\begin{itemize}
\item[(i)] for all $x\in X$, $|\widehat\lambda_x|^2\widehat\varrho_x\ll\widehat\varrho_{\phi(x)}$,
\item[(ii)] for every $x\in X$, $\varrho$-a.e. $w\in W$ and $\vartheta_{\phi(x)}^w$-a.e. $s\in S$, $\frac{\D|\widehat\lambda_x|^2\widehat\varrho_x}{\D\widehat\varrho_{\phi(x)}}(w,s)=\frac{\D|\lambda_x(w)|^2\vartheta_x^w}{\D\vartheta_{\phi(x)}^w}(s)$.
\end{itemize}
\end{lem}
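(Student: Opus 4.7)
The plan is to derive both (i) and (ii) from the disintegration-style formula \eqref{normaroz}, which converts integrals against $\widehat\varrho_x$ into iterated integrals first against $\vartheta_x^w$ and then against $\varrho$. Both items reduce to elementary manipulations once one learns to pass from the joint picture on $W\times S$ to the fiberwise one on $S$.

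For (i), I would fix $E\in\ascr\otimes\varSigma$ with $\widehat\varrho_{\phi(x)}(E)=0$ and write $E_w=\{s\in S:(w,s)\in E\}$ for its $w$-section. Applying \eqref{normaroz} to $F=\chi_E$ gives $\int_W \vartheta_{\phi(x)}^w(E_w)\D\varrho(w)=0$, so $\vartheta_{\phi(x)}^w(E_w)=0$ for $\varrho$-a.e.\ $w\in W$. Hypothesis (\texttt{B}) then forces $|\lambda_x(w)|^2\vartheta_x^w(E_w)=0$ for those $w$ (trivially if $\lambda_x(w)=0$, by absolute continuity otherwise). Finally \eqref{normaroz} with $F=|\widehat\lambda_x|^2\chi_E$ yields
\begin{align*}
\int_E |\widehat\lambda_x|^2\D\widehat\varrho_x \;=\; \int_W |\lambda_x(w)|^2\vartheta_x^w(E_w)\D\varrho(w)\;=\;0,
\end{align*}
which is the required $|\widehat\lambda_x|^2\widehat\varrho_x\ll\widehat\varrho_{\phi(x)}$.

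For (ii), let $h$ be any measurable representative of $\D|\widehat\lambda_x|^2\widehat\varrho_x/\D\widehat\varrho_{\phi(x)}$, whose existence is guaranteed by (i). I would show that, for $\varrho$-a.e.\ $w$, the section $h(w,\cdot)$ serves as a version of $\D|\lambda_x(w)|^2\vartheta_x^w/\D\vartheta_{\phi(x)}^w$, which is exactly the claimed identity modulo $\vartheta_{\phi(x)}^w$-null sets. Taking $E=A\times\sigma$ with $A\in\ascr$, $\sigma\in\varSigma$, the defining property of $h$ together with \eqref{normaroz} applied to both sides gives
\begin{align*}
\int_A \Bigl(\int_\sigma h(w,s)\D\vartheta_{\phi(x)}^w(s)\Bigr)\D\varrho(w) \;=\; \int_A |\lambda_x(w)|^2\vartheta_x^w(\sigma)\D\varrho(w).
\end{align*}
Since $A\in\ascr$ is arbitrary and $\varrho$ is $\sigma$-finite, the two integrands must be $\varrho$-a.e.\ equal; that is, for each fixed $\sigma$ there is a $\varrho$-null exceptional set outside of which $h(w,\cdot)$ integrates $\chi_\sigma$ against $\vartheta_{\phi(x)}^w$ to the correct value $|\lambda_x(w)|^2\vartheta_x^w(\sigma)$.

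The hard part I expect is the pointwise-versus-jointly-measurable Radon--Nikodym subtlety: the exceptional $w$-set in the previous step a priori depends on $\sigma$, yet (ii) needs a single $\varrho$-null set outside of which the identity holds $\vartheta_{\phi(x)}^w$-a.e.\ in $s$. The standard way around this is to choose a countable algebra generating $\varSigma$, discard the countable union of exceptional sets obtained by running the previous argument on this algebra, and then extend to all of $\varSigma$ by a monotone class argument; this gives, for $\varrho$-a.e.\ $w$, a genuine Radon--Nikodym derivative $h(w,\cdot)$ of $|\lambda_x(w)|^2\vartheta_x^w$ with respect to $\vartheta_{\phi(x)}^w$, and uniqueness of the latter then delivers (ii).
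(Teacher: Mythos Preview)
Your treatment of (i) is correct and is just the spelled-out version of the paper's one-line ``Using $(\texttt{B})$ and \eqref{normaroz} we easily get (i).''

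For (ii) the paper takes a slightly different route. Rather than starting from the joint derivative $H_x$ and arguing that its $w$-sections serve as the fiberwise derivatives, the paper introduces \emph{both} functions at once---the joint derivative $H_x(w,s)=\frac{\D|\widehat\lambda_x|^2\widehat\varrho_x}{\D\widehat\varrho_{\phi(x)}}(w,s)$ and the fiberwise one $h_x(w,s)=\frac{\D|\lambda_x(w)|^2\vartheta_x^w}{\D\vartheta_{\phi(x)}^w}(s)$---and shows via \eqref{normaroz} that $\int_{A\times\sigma}H_x\,\D\widehat\varrho_{\phi(x)}=\int_{A\times\sigma}h_x\,\D\widehat\varrho_{\phi(x)}$ for every rectangle $A\times\sigma$. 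Since rectangles form a $\pi$-system generating $\ascr\otimes\varSigma$ and $\widehat\varrho_{\phi(x)}$ is $\sigma$-finite, one gets $H_x=h_x$ a.e.\ $[\widehat\varrho_{\phi(x)}]$ in a single stroke, which by \eqref{normaroz} is exactly (ii). This avoids the ``exceptional set depends on $\sigma$'' problem you identified---no countable generating algebra for $\varSigma$ is invoked---but at the cost of tacitly assuming that the fiberwise derivatives can be assembled into a jointly $\ascr\otimes\varSigma$-measurable $h_x$, a point the paper does not address. Your approach avoids that choice of jointly measurable version but trades it for the countable-generation hypothesis on $\varSigma$; in effect the two arguments bury the same technical assumption in different places, and the paper's version is simply terser about it.
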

\begin{proof}
Using $(\texttt{B})$ and \eqref{normaroz} we easily get (i). Now, for any $x\in X$ we define $H_x, h_x\colon W\times S\to \rbop$ by $H_x(w,s)=\frac{\D|\widehat\lambda_x|^2\widehat\varrho_x}{\D\widehat\varrho_{\phi(x)}}(w,s)$ and $h_x(w,s)=\frac{\D|\lambda_x(w)|^2\vartheta_x^w}{\D\vartheta_{\phi(x)}^w}(s)$. Then, for every $x\in X$, by the Radon-Nikodym theorem and \eqref{normaroz}, we have
\begin{align*}
\int_A\int_\sigma H_x(w,s)\D\vartheta_{\phi(x)}^w(s)\D\varrho(w)
&=\int_{A\times\sigma} H_x(w,s)\D\widehat\varrho_{\phi(x)}(w,s)\\
&=\int_A\int_\sigma |\lambda_x(w)|^2\D\vartheta_x^w(s)\D\varrho(w)\\
&=\int_A\int_\sigma h_x(w,s)\D\vartheta_{\phi(x)}^w(s)\D\varrho(w),\quad A\times\sigma\in\ascr\otimes \varSigma.
\end{align*}
This implies (ii), which completes the proof.
\end{proof}
We will use frequently the following notation
\begin{align}\label{l22}
\Gsf_x^w(s):=\left\{
            \begin{array}{ll}
              \sum_{y\in\phi^{-1}(\{x\})} \frac{\D|\widehat\lambda_y|^2\widehat\varrho_y}{\D\widehat\varrho_x}(w,s), & \hbox{for $x\in\phi(X)$,} \\
              0, & \hbox{otherwise.}
            \end{array}
          \right.
\end{align}
Note that for every $x\in X$ the function 
\begin{align}\label{tlustazupa}
\Gsf_x\colon W\times S\ni(w,s)\mapsto \Gsf_x^w(s)\in \rbop    
\end{align}
is $\ascr\otimes\varSigma$-measurable and, in view of Lemma \ref{warunekC}, we have
\begin{align}\label{obiad}
\sum_{y\in \phi^{-1}(\{x\})}\int_{W}\int_S |\lambda_y (w)|^2 |F(w,s)|^2 \D\vartheta_y^w(s)\D\varrho(w)=\int_{W\times S} \Gsf_x(w,s)|F(w,s)|^2\D\widehat\varrho_x(w,s)
\end{align}
for every $F\in \mscr(\ascr \otimes \Sigma)$ (here and later on we adhere to the convention that $\sum_{\varnothing}=0$). Set 
\begin{align*}
\Gsf=\{\Gsf_x\colon x\in X\}.
\end{align*} 
The following set of assumptions complements \eqref{zemanek}
\begin{align}\label{majdak}
\begin{minipage}{88ex}
$(S,\varSigma)$ is a measurable space, $\{\vartheta_x^w\colon x\in X, w\in W\}$ is a family of probability measures on $\varSigma$ satisfying $(\texttt{A})$ and $(\texttt{B})$, $\{\widehat\varrho_x\colon x\in X\}$ is a family of measures on $\ascr\otimes\varSigma$ given by \eqref{miararoz}, $\widehat\hhb=\{L^2(\widehat\varrho_x)\colon x\in X\}$, and $\widehat\varLambdab=\{M_{\widehat\lambda_x}\colon x\in X\}$, where $M_{\widehat\lambda_x}\colon L^2(\widehat\varrho_{\phi(x)})\supseteq\dom(M_{\widehat\lambda_x})\to L^2(\widehat\varrho_x)$ is the operator of multiplication by $\widehat\lambda_x$ given by $\widehat\lambda_x(w,s)=\lambda_x(w)$ for $(w,s)\in W\times S$. 
\end{minipage}
\end{align}
(That the operator $M_{\widehat\lambda_x}$ is well-defined follows from Lemma \ref{warunekC}.) 

It is no surprise that our construction leads to an extension of $\cfl$.
\begin{lem}\label{exten}
Assume \eqref{zemanek} and \eqref{majdak}. Then $\cfl\subseteq C_{\phi, \widehat\varLambdab}$.
\end{lem}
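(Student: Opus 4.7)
The plan is to exhibit a natural isometric embedding $J\colon \ell^2(\hhb)\hookrightarrow \ell^2(\widehat\hhb)$ and verify that $\cfl$ factors through $C_{\phi,\widehat\varLambdab}$ via $J$. Concretely, for $\bsf=\{f_x\}_{x\in X}\in\ell^2(\hhb)$, I would set $(J\bsf)_x(w,s):=f_x(w)$ for $(w,s)\in W\times S$. The key observation that makes $J$ well-defined and isometric is that each $\vartheta_x^w$ is a probability measure, so applying \eqref{normaroz} with $F(w,s)=|f_x(w)|^2$ gives
\begin{align*}
\int_{W\times S}|f_x(w)|^2\,\D\widehat\varrho_x(w,s)=\int_W|f_x(w)|^2\int_S 1\,\D\vartheta_x^w(s)\,\D\varrho(w)=\|f_x\|_{L^2(\varrho)}^2,
\end{align*}
so summing over $x\in X$ identifies $\|J\bsf\|_{\ell^2(\widehat\hhb)}=\|\bsf\|_{\ell^2(\hhb)}$. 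Thus $J$ is an isometric embedding and I treat $\ell^2(\hhb)\subseteq\ell^2(\widehat\hhb)$ in the sense of \eqref{miararoz}.

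Next I would take $\bsf\in\dom(\cfl)$ and check that $J\bsf\in\dom(C_{\phi,\widehat\varLambdab})$. For each $x\in X$, the function $(J\bsf)_{\phi(x)}(w,s)=f_{\phi(x)}(w)$ depends only on $w$, so
\begin{align*}
\int_{W\times S}|\widehat\lambda_x(w,s)(J\bsf)_{\phi(x)}(w,s)|^2\,\D\widehat\varrho_x(w,s)=\int_W|\lambda_x(w)f_{\phi(x)}(w)|^2\,\D\varrho(w)=\|M_{\lambda_x}f_{\phi(x)}\|_{L^2(\varrho)}^2,
\end{align*}
which is finite because $\bsf\in\dom_{\varLambdab}$. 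Thus $(J\bsf)_{\phi(x)}\in\dom(M_{\widehat\lambda_x})$ for every $x\in X$, and summing the above equalities over $x$ yields $\sum_{x\in X}\|M_{\widehat\lambda_x}(J\bsf)_{\phi(x)}\|^2=\|\cfl\bsf\|^2<\infty$, confirming $J\bsf\in\dom(C_{\phi,\widehat\varLambdab})$.

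Finally I would identify the action: by construction, for every $x\in X$,
\begin{align*}
\big(C_{\phi,\widehat\varLambdab}J\bsf\big)_x(w,s)=\widehat\lambda_x(w,s)f_{\phi(x)}(w)=\lambda_x(w)f_{\phi(x)}(w)=(\cfl\bsf)_x(w)=\big(J\cfl\bsf\big)_x(w,s),
\end{align*}
so $C_{\phi,\widehat\varLambdab}J\bsf=J\cfl\bsf$, which under the identification $J$ reads $\cfl\subseteq C_{\phi,\widehat\varLambdab}$. There is no real obstacle here; the only subtle point is recognising that $\vartheta_x^w$ being a probability measure is exactly what makes the embedding $J$ isometric and allows the $s$-integrals to drop out so cleanly, and one should be careful that the measurability of the constant-in-$s$ liftings $\widehat{f_x}$ with respect to $\ascr\otimes\varSigma$ follows from the $\ascr$-measurability of $f_x$.
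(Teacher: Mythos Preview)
Your proof is correct and essentially identical to the paper's: both construct the isometric embedding $\ell^2(\hhb)\hookrightarrow\ell^2(\widehat\hhb)$ by lifting each $f_x\in L^2(\varrho)$ to the function constant in $s$, use that the $\vartheta_x^w$ are probability measures together with \eqref{normaroz} to verify the isometry and the domain inclusion, and then check that the actions agree. Your write-up simply spells out the computations more explicitly than the paper does.
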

\begin{proof}
In view of \eqref{normaroz}, for every $x\in X$ the space $L^2(\varrho)$ can be isometrically embedded into $L^2(\widehat\varrho_x)$ via the mapping 
\begin{align*}
Q_x\colon L^2(\varrho)\ni f\mapsto F_x\in L^2(\widehat\varrho_x)   
\end{align*}
where 
\begin{align*}
\text{$F_x(w,s)=f(w)$ for $\widehat\varrho_x$-a.e. $(w,s)\in W\times S$.}
\end{align*}
Therefore, $\ell^2(\hhb)$ can be isometrically embedded into $\ell^2(\widehat\hhb)$ via $Q\in\bsl\big(\ell^2(\hhb), \ell^2(\widehat\hhb)\big)$ defined by $(Q \bsf)_x:= Q_x f_x$, $x\in X$, $\bsf\in\ell^2(\hhb)$. Since all the measures $\vartheta_x^w$ are probability measures, we deduce using \eqref{normaroz} that $\dom(\cfl)  \subseteq \dom(C_{\phi, \widehat\varLambdab} Q)$ and $Q \cfl \bsf= C_{\phi, \widehat\varLambdab}Q \bsf$ for every $\bsf\in \dom(\cfl)$. This means that $C_{\phi,\widehat\varLambdab}$ is an extension of $\cfl$.
\end{proof}
Next we formulate a few necessary results concerning properties of $C_{\phi,\widehat\varLambdab}$. First, we show its dense definiteness and closedness.
\begin{prop}\label{invitedlenie}
Assume \eqref{zemanek} and \eqref{majdak}. Suppose that for every $x\in X$, $\Gsf_x<\infty$ a.e.\ $[\widehat\varrho_{x}]$. Then $C_{\phi,\widehat\varLambdab}$ is a closed and densely defined operator in $\ell^2(\widehat\hhb)$.
\end{prop}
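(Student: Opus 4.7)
The plan is to treat closedness and dense definiteness separately, using the structural lemmas proved earlier in the paper together with the Radon--Nikodym information packaged in $\Gsf_x$.

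\textbf{Closedness.} By Lemma~\ref{closed}, it is enough to show that each operator $M_{\widehat\lambda_y}\colon L^2(\widehat\varrho_{\phi(y)})\to L^2(\widehat\varrho_y)$ is closed. Since $M_{\widehat\lambda_y}$ is a multiplication operator of exactly the form covered by Lemma~\ref{poswietach-1}, what must be verified is that
\[
\frac{\D|\widehat\lambda_y|^2\widehat\varrho_y}{\D\widehat\varrho_{\phi(y)}}<\infty\quad \text{a.e. } [\widehat\varrho_{\phi(y)}].
\]
But $y\in \phi^{-1}(\{\phi(y)\})$, so this Radon--Nikodym derivative is one of the nonnegative summands in the definition \eqref{l22} of $\Gsf_{\phi(y)}$, and is therefore pointwise dominated by it. The standing hypothesis $\Gsf_x<\infty$ a.e.~$[\widehat\varrho_x]$, applied with $x=\phi(y)$, finishes this step. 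Hence $\widehat\varLambdab$ is closed and Lemma~\ref{closed} gives closedness of $C_{\phi,\widehat\varLambdab}$.

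\textbf{Dense definiteness.} By Proposition~\ref{ortsumdense}, it suffices to prove that $C_{\phi,\widehat\varLambdab,x}$ is densely defined in $L^2(\widehat\varrho_x)$ for every $x\in\phi(X)$. The key step is to identify $\dom(C_{\phi,\widehat\varLambdab,x})$ with a weighted $L^2$-space. For $F\in L^2(\widehat\varrho_x)$ the membership $F\in\dom(M_{\widehat\lambda_y})$ for each $y\in\phi^{-1}(\{x\})$ is absorbed into a single summability condition: using Lemma~\ref{warunekC}(ii) and identity \eqref{obiad} with $\varrho$ replaced by $\widehat\varrho_x$ (or more directly, by Radon--Nikodym applied to each $|\widehat\lambda_y|^2\widehat\varrho_y\ll\widehat\varrho_{\phi(y)}=\widehat\varrho_x$), one obtains
\[
\sum_{y\in\phi^{-1}(\{x\})}\int_{W\times S}|\widehat\lambda_y|^2|F|^2\D\widehat\varrho_y = \int_{W\times S}|F|^2\,\Gsf_x\D\widehat\varrho_x.
\]
Since the summands on the left are nonnegative, finiteness of the right-hand side forces each to be finite, so the displayed equality identifies $\dom(C_{\phi,\widehat\varLambdab,x})=L^2\bigl((1+\Gsf_x)\D\widehat\varrho_x\bigr)$.

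Once the domain has this form, density is essentially immediate: the hypothesis $\Gsf_x<\infty$ a.e.~$[\widehat\varrho_x]$ together with \cite[Lemma~12.1]{b-j-j-s-ampa-2014} (the same tool used in the proof of Lemma~\ref{poswietach-1}) shows that $L^2\bigl((1+\Gsf_x)\D\widehat\varrho_x\bigr)$ is dense in $L^2(\widehat\varrho_x)$. Applying this for each $x\in\phi(X)$ and then invoking Proposition~\ref{ortsumdense} completes the proof. The main, and really the only, technical obstacle is the passage from the individual integrability conditions encoded in $\dom(M_{\widehat\lambda_y})$ to the single condition involving $\Gsf_x$; everything else is bookkeeping that reduces the claim to results already established.
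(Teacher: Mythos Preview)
Your proof is correct and follows essentially the same route as the paper's: closedness via Lemma~\ref{closed} and Lemma~\ref{poswietach-1}, and dense definiteness via the identification $\dom(C_{\phi,\widehat\varLambdab,x})=L^2\bigl((1+\Gsf_x)\D\widehat\varrho_x\bigr)$ together with \cite[Lemma~12.1]{b-j-j-s-ampa-2014} and Proposition~\ref{ortsumdense}. You are in fact slightly more explicit than the paper in checking the hypothesis of Lemma~\ref{poswietach-1} (bounding the individual Radon--Nikodym derivative by $\Gsf_{\phi(y)}$), which the paper leaves implicit.
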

\begin{proof}
Let $x\in X$. Let $H_x=\frac{\D|\widehat \lambda_x|^2\widehat\varrho_x}{\D\widehat\varrho_{\phi(x)}}$.  Using \eqref{dzmnozenie} we get
\begin{align*}
\dom\big(\widehat\varLambda_x\big)= L^2\Big((1+H_x)\D\widehat\varrho_{\phi(x)}\Big).  
\end{align*}
This yields the equality
\begin{align*}
\dom(C_{\phi,\widehat\varLambdab, x})=L^2\big((1+\Gsf_x)\D\widehat\varrho_x\big),\quad x\in X.
\end{align*}
By \cite[Lemma 12.1]{b-j-j-s-ampa-2014}, the space $L^2\big((1+\Gsf_x)\D\widehat\varrho_x\big)$ is dense in $L^2(\widehat\varrho_x)$ and consequently $C_{\phi,\widehat\varLambdab, x}$ is densely defined. Since $x\in X$ can be chosen arbitrarily, by applying Proposition \ref{ortsumdense}, we get dense definiteness of $C_{\phi, \widehat\varLambdab}$. 

Now using Lemma \ref{closed} and Lemma \ref{poswietach-1},  we deduce that $C_{\phi, \widehat\varLambdab}$ is closed.
\end{proof}
The claim of the following auxiliary lemma is a direct consequence of $\sigma$-finiteness of $\widehat\varrho_x$. For the reader convenience we provide its proof.
\begin{lem} \label{swieta}
Assume \eqref{zemanek} and \eqref{majdak}. Let $x \in X$. Suppose that $\Gsf_x<\infty$ a.e.\ $[\widehat\varrho_x]$. Then there exists an $\ascr\otimes\varSigma$-measurable function $\alpha_x \colon W \times S \to (0,+\infty)$ such that
\begin{align}\label{alfa}
\int_{W\times S} \Big(1+\Gsf_x(w,s) +(\Gsf_x(w,s))^2\Big) (\alpha_x(w,s))^2 \D\widehat\varrho_x(w,s) < \infty.
    \end{align}
\end{lem}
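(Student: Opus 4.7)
My plan is to exploit the $\sigma$-finiteness of $\widehat\varrho_x$ together with the assumption $\Gsf_x<\infty$ a.e.\ $[\widehat\varrho_x]$ in order to partition $W\times S$, up to a null set, into countably many measurable pieces on each of which the integrand $1+\Gsf_x+\Gsf_x^2$ is bounded and which have finite $\widehat\varrho_x$-measure. On each such piece I will take $\alpha_x$ to be a small positive constant chosen so that the resulting series converges, and on the null set I will set $\alpha_x=1$ to guarantee strict positivity everywhere.

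More precisely, I would first put $N:=\{(w,s)\in W\times S\colon \Gsf_x(w,s)=\infty\}$, which is $\ascr\otimes\varSigma$-measurable with $\widehat\varrho_x(N)=0$, and then decompose its complement according to the size of the integrand by setting
\begin{align*}
F_n:=\{(w,s)\in N^c\colon n-1\Le 1+\Gsf_x(w,s)+\Gsf_x(w,s)^2<n\},\qquad n\in\nbb.
\end{align*}
Next, using the $\sigma$-finiteness of $\widehat\varrho_x$, I would pick a disjoint measurable partition $W\times S=\bigsqcup_{k\in\nbb}E_k$ with $\widehat\varrho_x(E_k)<\infty$, and put $A_{n,k}:=F_n\cap E_k$. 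The family $\{A_{n,k}\}_{n,k\in\nbb}$ is pairwise disjoint, its union is $N^c$, each $\widehat\varrho_x(A_{n,k})$ is finite, and on $A_{n,k}$ the integrand is bounded by $n$.

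For each pair $(n,k)$ I would choose a constant $c_{n,k}\in (0,\infty)$ so small that
\begin{align*}
c_{n,k}^2\cdot n\cdot\big(1+\widehat\varrho_x(A_{n,k})\big)\Le 2^{-(n+k)},
\end{align*}
and then define
\begin{align*}
\alpha_x(w,s):=\chi_N(w,s)+\sum_{n,k\in\nbb}c_{n,k}\,\chi_{A_{n,k}}(w,s),\quad (w,s)\in W\times S.
\end{align*}
This $\alpha_x$ is $\ascr\otimes\varSigma$-measurable and strictly positive on $W\times S$. Since $\widehat\varrho_x(N)=0$, the integral in \eqref{alfa} reduces to a sum over the sets $A_{n,k}$, each contribution being at most $c_{n,k}^2\cdot n\cdot\widehat\varrho_x(A_{n,k})\Le 2^{-(n+k)}$, and hence the total is dominated by $\sum_{n,k\in\nbb}2^{-(n+k)}<\infty$.

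There is no real obstacle here; the claim is a bookkeeping consequence of $\sigma$-finiteness, as hinted in the statement. The only subtlety worth mentioning is the requirement that $\alpha_x$ be strictly positive \emph{everywhere} rather than merely a.e., which forces the explicit $\chi_N$ correction on the null set $\{\Gsf_x=\infty\}$; this detail costs nothing since that set carries no mass.
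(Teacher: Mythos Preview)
Your argument is correct: you use the $\sigma$-finiteness of $\widehat\varrho_x$ directly, partition $W\times S$ into pieces of finite measure on which $1+\Gsf_x+\Gsf_x^2$ is bounded, and choose a piecewise-constant $\alpha_x$ to force a convergent series. This is a perfectly general argument that would work for any $\sigma$-finite measure and any a.e.\ finite nonnegative measurable function in place of $\Gsf_x$.

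The paper takes a different, shorter route that exploits the specific structure of $\widehat\varrho_x$. Rather than invoking the $\sigma$-finiteness of $\widehat\varrho_x$, it uses the $\sigma$-finiteness of $\varrho$ on $W$ to pick an $\ascr$-measurable $f\colon W\to(0,\infty)$ with $\int_W f\,\D\varrho<\infty$, and then defines $\alpha_x$ in closed form as
\[
\alpha_x(w,s)=\sqrt{\dfrac{f(w)}{1+\Gsf_x(w,s)+\Gsf_x(w,s)^2}}.
\]
The integrand in \eqref{alfa} then collapses to $f(w)$, and applying \eqref{normaroz} together with the fact that each $\vartheta_x^w$ is a probability measure gives the bound $\int_W f\,\D\varrho<\infty$ in one line. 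What the paper's approach buys is brevity and an explicit formula; what your approach buys is generality (no reliance on the disintegration \eqref{normaroz}) and a cleaner treatment of the strict positivity requirement on the null set $\{\Gsf_x=\infty\}$, which the paper's formula handles only implicitly.
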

\begin{proof}
Since the measure $\varrho$ is $\sigma$-finite there exists an $\ascr$-measurable function $f \colon W \to (0,+\infty)$ such that $\int_W f(w) \D \varrho(w)<\infty$. Now, for any given $w \in W$ and $s\in S$ we define
\begin{align*}
\alpha_x(w,s) =
\sqrt{\frac{f(w)}{1+\Gsf_x(w,s)+(\Gsf_x(w,s))^2}}.
\end{align*}
Clearly, the function $\alpha_x$ is
$\ascr\otimes\varSigma$-measurable. Moreover, by \eqref{normaroz}, we have 
\begin{multline*}
\int_{W\times S} \Big(1+\Gsf_x(w,s)+(\Gsf_x(w,s))^2\Big) (\alpha_x(w,s))^2 \D\widehat\varrho_x(w,s)\\
=\int_W \int_S f(w) \D\vartheta_x^w(s)\D\varrho(w)
=\int_W f(w)\D\varrho(w) < \infty.
\end{multline*}
This completes the proof.
\end{proof}
The proof of the criterion for subnormality of $\cfl$, which we give further below, relies much on the fact that $C_{\phi,\widehat\varLambdab}^*C_{\phi,\widehat\varLambdab}$ is a multiplication operator.
\begin{prop}\label{dziekan}
Assume \eqref{zemanek} and \eqref{majdak}. Suppose that for every $x\in X$, $\Gsf_x<\infty$ a.e.\ $[\widehat\varrho_{x}]$. Then $C_{\phi,\widehat\varLambdab}^* C_{\phi,\widehat\varLambdab} = M_{\Gsf}$.
\end{prop}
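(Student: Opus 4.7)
The plan is to verify that both $M_{\Gsf}$ and $C_{\phi,\widehat\varLambdab}^*C_{\phi,\widehat\varLambdab}$ are selfadjoint on $\ell^2(\widehat\hhb)$, and then to establish the one-sided inclusion $M_{\Gsf}\subseteq C_{\phi,\widehat\varLambdab}^*C_{\phi,\widehat\varLambdab}$; since a selfadjoint operator admits no proper selfadjoint extension, equality will follow. Selfadjointness of $C_{\phi,\widehat\varLambdab}^*C_{\phi,\widehat\varLambdab}$ is immediate from von Neumann's theorem together with Proposition \ref{invitedlenie}. For $M_{\Gsf}$, the hypothesis $\Gsf_x<\infty$ a.e.\ $[\widehat\varrho_x]$ combined with Corollary \ref{poswietach} gives dense definiteness and closedness, and selfadjointness then follows from the observation, recorded at the end of Section~2, that $M_{\varGammab}$ is selfadjoint whenever $\varGammab$ is $\overline{\rbb}$-valued.

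The inclusion will rest on the polarized form of \eqref{obiad}, obtained by repeating its derivation (via Lemma \ref{warunekC}(ii) and \eqref{normaroz}) with $|F|^2$ replaced by $F\overline{G}$, yielding for each $x\in X$ and every sufficiently integrable $F,G\in\mscr(\ascr\otimes\varSigma)$,
\begin{align*}
\sum_{y\in\phi^{-1}(\{x\})}\int_{W\times S} |\widehat\lambda_y|^2 F\overline{G}\,\D\widehat\varrho_y = \int_{W\times S} \Gsf_x F\overline{G}\,\D\widehat\varrho_x.
\end{align*}
Given $\bsF\in\dom(M_{\Gsf})$, taking $F=G=F_x$ and summing in $x$ gives $\|C_{\phi,\widehat\varLambdab}\bsF\|^2 = \sum_{x\in X}\int\Gsf_x|F_x|^2\,\D\widehat\varrho_x$; splitting $\Gsf_x|F_x|^2 = (\Gsf_x|F_x|)\cdot|F_x|$ and applying Cauchy--Schwarz pointwise and then again on $\ell^2(X)$ bounds this by $\|M_{\Gsf}\bsF\|\cdot\|\bsF\|<\infty$, so $\bsF\in\dom(C_{\phi,\widehat\varLambdab})$. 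For an arbitrary $\bsG\in\dom(C_{\phi,\widehat\varLambdab})$, the polarized identity together with a Fubini-type rearrangement yields
\begin{align*}
\langle C_{\phi,\widehat\varLambdab}\bsG, C_{\phi,\widehat\varLambdab}\bsF\rangle
&= \sum_{y\in X}\int|\widehat\lambda_y|^2 G_{\phi(y)}\overline{F_{\phi(y)}}\,\D\widehat\varrho_y \\
&= \sum_{x\in X}\int \Gsf_x G_x\overline{F_x}\,\D\widehat\varrho_x = \langle\bsG, M_{\Gsf}\bsF\rangle,
\end{align*}
which shows $C_{\phi,\widehat\varLambdab}\bsF\in\dom(C_{\phi,\widehat\varLambdab}^*)$ and $C_{\phi,\widehat\varLambdab}^*C_{\phi,\widehat\varLambdab}\bsF = M_{\Gsf}\bsF$.

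The principal technicality is justifying the absolute convergence underlying the Fubini step above. My intended estimate applies two Cauchy--Schwarz inequalities in succession: first pointwise in $(w,s)$, producing the integrable majorant $|\widehat\lambda_y G_{\phi(y)}|\cdot|\widehat\lambda_y F_{\phi(y)}|$, and then over $X$, yielding the finite bound $\|C_{\phi,\widehat\varLambdab}\bsG\|\cdot\|C_{\phi,\widehat\varLambdab}\bsF\|$. Once absolute convergence is secured the reindexing $\sum_{y\in X}=\sum_{x\in X}\sum_{y\in\phi^{-1}(\{x\})}$ and the invocation of the polarized identity are routine, and the two selfadjoint operators coincide.
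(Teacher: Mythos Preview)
Your proof is correct and takes a genuinely different route from the paper's. The paper establishes the \emph{opposite} inclusion $C_{\phi,\widehat\varLambdab}^*C_{\phi,\widehat\varLambdab}\subseteq M_{\Gsf}$: for $\bsF\in\dom(C_{\phi,\widehat\varLambdab}^*C_{\phi,\widehat\varLambdab})$ it tests against vectors $\bsE$ supported at a single coordinate $x_0$, of the form $E_{x_0}=\chi_{A\times\sigma}\,\alpha_{x_0}$, where $\alpha_{x_0}$ is the strictly positive integrability cutoff supplied by Lemma~\ref{swieta}; comparing $\langle C_{\phi,\widehat\varLambdab}^*C_{\phi,\widehat\varLambdab}\bsF,\bsE\rangle$ with $\langle C_{\phi,\widehat\varLambdab}\bsF,C_{\phi,\widehat\varLambdab}\bsE\rangle$ and varying $A\times\sigma$ identifies $(C_{\phi,\widehat\varLambdab}^*C_{\phi,\widehat\varLambdab}\bsF)_{x_0}=\Gsf_{x_0}F_{x_0}$ pointwise. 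You instead work from the other side, showing $M_{\Gsf}\subseteq C_{\phi,\widehat\varLambdab}^*C_{\phi,\widehat\varLambdab}$ via the defining property of the adjoint and two Cauchy--Schwarz estimates. Your approach is more economical---it bypasses Lemma~\ref{swieta} altogether and needs no auxiliary test vectors---while the paper's approach makes the pointwise identity $(C^*C\bsF)_x=\Gsf_x F_x$ visible without first checking that $\dom(M_{\Gsf})\subseteq\dom(C_{\phi,\widehat\varLambdab})$. Both arguments close with the same observation that two selfadjoint operators related by inclusion must coincide.
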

\begin{proof}
Let $x_0 \in X$, $A \in \ascr$, and $\sigma \in \Sigma$. Let $\bsE=\{E_x\colon x\in X\}$ be a family of functions $E_x\colon W\times S\to \rbb_+$ given by $E_x(w,s) = \chi_{A \times \sigma}(w,s) \alpha_{x_0}(w,s) \delta_{x, x_0}$, where function $\alpha_{x_0}$ satisfies \eqref{alfa}, and $\delta_{x,x_0}$ is the Kronecker delta. It follows from \eqref{alfa} that $\bsE\in\ell^2(\widehat\hhb)$. Moreover, by \eqref{normaroz}, \eqref{obiad}, and \eqref{alfa}, we get
\begin{align*}
\sum_{x \in X} \int_{W\times S} \Big|\widehat\lambda_x(w,s) &E_{\phi(x)} (w, s)\Big|^2\D\widehat\varrho_x(w,s)\\
&=\sum_{x \in \phi^{-1}(\{x_0\})} \int_A  \int_{\sigma} |\lambda_x(w)|^2 (\alpha_{x_0}(w,s))^2\D\vartheta_x^w(s)\D\varrho(w)\\
&\leqslant\int_{W\times S}\Gsf_{x_0}(w, s)(\alpha_{x_0}(w,s))^2\D\widehat\varrho_{x_0}(w,s) <\infty,
\end{align*}
which proves that $\bsE \in \dom(C_{\phi,\widehat\varLambdab})$.

Take now $\bsF\in\dom(C_{\phi, \widehat\varLambdab}^*C_{\phi, \widehat\varLambdab})$. Then
\begin{align*}
    \is{C_{\phi, \widehat\varLambdab}^*C_{\phi, \widehat\varLambdab}\bsF}{\bsE}=\int_{A\times \sigma}   \big(C_{\phi, \widehat\varLambdab}^*C_{\phi, \widehat\varLambdab}\bsF\big)_{x_0}(w,s)\,\alpha_{x_0}(w,s)\D\widehat\varrho_{x_0}(w,s).
\end{align*}
On the other hand, since $\bsE \in \dom(C_{\phi, \widehat\varLambdab})$, we have
\begin{align*}
\is{C_{\phi, \widehat\varLambdab}^*C_{\phi, \widehat\varLambdab}\bsF}{\bsE}
&=\is{C_{\phi, \widehat\varLambdab}\bsF}{C_{\phi, \widehat\varLambdab}\bsE}\\
&= \sum_{x\in X}\int_{W\times S} |\widehat \lambda_x(w,s)|^2 F_{\phi(x)}(w,s)\overline{E_{\phi(x)}(w,s)}\D\widehat\varrho_x(w,s)\\
&= \sum_{x\in \phi^{-1}(\{x_0\})}\int_{A\times \sigma} |\lambda_x(w)|^2 F_{x_0}(w,s) \alpha_{x_0}(w,s)\D\widehat\varrho_x(w,s)\\
&= \sum_{x\in \phi^{-1}(\{x_0\})}\int_{A\times \sigma} F_{x_0}(w,s) \alpha_{x_0}(w,s)\frac{\D |\widehat \lambda_x|^2\widehat \varrho_x}{\D \widehat \varrho_{x_0}} (w,s)\D\widehat\varrho_{x_0}(w,s)\\
&\overset{(\dag)}{=} \int_{A\times \sigma} \sum_{x\in \phi^{-1}(\{x_0\})}  F_{x_0}(w,s) \alpha_{x_0}(w,s)\frac{\D|\widehat \lambda_x|^2 \widehat \varrho_x}{\D \widehat \varrho_{x_0}}(w,s) \D\widehat\varrho_{x_0}(w,s)\\
&= \int_{A\times \sigma}  F_{x_0}(w,s)\Gsf_{x_0}(w,s)\alpha_{x_0}(w,s)\D\widehat\varrho_{x_0}(w,s),
\end{align*}
where in $(\dag)$ we used the fact that the function $(w,s) \mapsto \alpha_{x_0} (w,s) \Gsf_{x_0}(w,s) \in L^2(\widehat \varrho_{x_0})$ which means that the function $$(w,s)\mapsto \sum_{x\in \phi^{-1}(\{x_0\})}  \big|F_{x_0}(w,s)\big| \alpha_{x_0}(w,s)\frac{\D |\widehat \lambda_x|^2\widehat \varrho_x}{\D \widehat \varrho_{x_0}}(w,s) $$ belongs to $L^1(\widehat\varrho_{x_0})$. Since $A \in \ascr$, and $\sigma \in \Sigma$ can be arbitrarily chosen, we get
\begin{align*}
\big(C_{\phi, \widehat\varLambdab}^*C_{\phi, \widehat\varLambdab}\bsF\big)_{x_0}\alpha_{x_0}=F_{x_0}\Gsf_{x_0}\alpha_{x_0} \quad \text{for a.e.\ $[\widehat\varrho_{x_0}]$,}
\end{align*}
which implies that 
\begin{align*}
\big(C_{\phi, \widehat\varLambdab}^*C_{\phi, \widehat\varLambdab}\bsF\big)_{x_0}=F_{x_0}\Gsf_{x_0} \quad \text{for a.e.\ $[\widehat\varrho_{x_0}]$}.
\end{align*}
Thus $\bsF \in \dom(M_{\Gsf})$ and $C_{\phi, \widehat\varLambdab}^*C_{\phi, \widehat\varLambdab}\bsF = M_{\Gsf} \bsF$. In view of Proposition \ref{invitedlenie}, the operator $C_{\phi, \widehat\varLambdab}^*C_{\phi, \widehat\varLambdab}$ is selfadjoint (cf. \cite[Theorem 5.39.]{wei}). Since $M_{\Gsf}$ is selfadjoint as well, both the operators $C_{\phi, \widehat\varLambdab}^*C_{\phi, \widehat\varLambdab}$ and $M_\Gsf$ coincide. This completes the proof.
\end{proof}
After all the above preparations we are in the position now to prove the criterion for the subnormality of $\cfl$.
\begin{thm}\label{kryterium}
Assume \eqref{zemanek} and \eqref{majdak}. Suppose that for every $x\in X$, $\Gsf_x<\infty$ a.e.\ $[\widehat\varrho_{x}]$
and
\begin{align*}
\lambda_x\Gsf_{\phi(x)}=\lambda_x \Gsf_x\quad \text{a.e. $[\widehat\varrho_{x}]$ for every $x\in X$.}
\end{align*}
Then $C_{\phi, \widehat \varLambdab}$ is quasinormal. Moreover,  $\cfl$ is subnormal and  $C_{\phi, \widehat \varLambdab}$ is its quasinormal extension.
\end{thm}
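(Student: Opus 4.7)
My plan is to reduce quasinormality of $C_{\phi,\widehat\varLambdab}$ to the operator identity $C_{\phi,\widehat\varLambdab}\, M_\Gsf = M_\Gsf\, C_{\phi,\widehat\varLambdab}$, and then invoke the criterion \eqref{QQQ}. Proposition \ref{dziekan} already identifies $C_{\phi,\widehat\varLambdab}^*C_{\phi,\widehat\varLambdab}$ with $M_\Gsf$, so once this intertwining is known, one gets
\[
C_{\phi,\widehat\varLambdab}\, C_{\phi,\widehat\varLambdab}^* C_{\phi,\widehat\varLambdab}
= C_{\phi,\widehat\varLambdab}\, M_\Gsf
= M_\Gsf\, C_{\phi,\widehat\varLambdab}
= C_{\phi,\widehat\varLambdab}^* C_{\phi,\widehat\varLambdab}\, C_{\phi,\widehat\varLambdab},
\]
and Proposition \ref{invitedlenie} supplies the closedness and dense definiteness needed to apply \eqref{QQQ}.

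The pointwise part of this intertwining is straightforward: for an $\bsf$ in the algebraic domain one computes
\[
\bigl(C_{\phi,\widehat\varLambdab} M_\Gsf \bsf\bigr)_x
= \widehat\lambda_x\, \Gsf_{\phi(x)}\, f_{\phi(x)},
\qquad
\bigl(M_\Gsf C_{\phi,\widehat\varLambdab} \bsf\bigr)_x
= \Gsf_x\, \widehat\lambda_x\, f_{\phi(x)},
\]
and these coincide $\widehat\varrho_x$-a.e.\ precisely by the consistency hypothesis $\widehat\lambda_x\,\Gsf_{\phi(x)}=\widehat\lambda_x\,\Gsf_x$. I expect the \textbf{main obstacle} to be matching the two unbounded-operator domains, since $\dom(C_{\phi,\widehat\varLambdab} M_\Gsf)$ is cut out by $L^2$-conditions on $\{\Gsf_x f_x\}_x$ and then on $\{\widehat\lambda_x \Gsf_{\phi(x)} f_{\phi(x)}\}_x$, whereas $\dom(M_\Gsf C_{\phi,\widehat\varLambdab})$ is cut out by $L^2$-conditions on $\{\widehat\lambda_x f_{\phi(x)}\}_x$ and then on $\{\Gsf_x \widehat\lambda_x f_{\phi(x)}\}_x$. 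The consistency identity converts one sum into the other after multiplication by $\widehat\lambda_x$, and the fact that $\widehat\lambda_x$ depends only on the $W$-variable (so its multiplication operator commutes with $M_{\Gsf_x}$) together with $\Gsf_x<\infty$ a.e.\ should let me push every $\bsf$ in one domain into the other; I would construct a common core of simple sequences supported on sets where $\Gsf_x$ is bounded, and then close up using Proposition \ref{invitedlenie}.

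For the moreover part, quasinormal operators are subnormal, so $C_{\phi,\widehat\varLambdab}$ has a normal extension $N$ on some ambient Hilbert space $\kk \supseteq \ell^2(\widehat\hhb)$. Lemma \ref{exten} supplies the isometric embedding $Q\colon \ell^2(\hhb)\to \ell^2(\widehat\hhb)$ with $Q\cfl\bsf = C_{\phi,\widehat\varLambdab} Q\bsf$ for $\bsf\in\dom(\cfl)$; composing with $\ell^2(\widehat\hhb)\hookrightarrow \kk$ realises $\cfl$ as a restriction of $N$, and thus $\cfl$ is subnormal with $C_{\phi,\widehat\varLambdab}$ as the asserted quasinormal extension living over the same underlying measure space $W$.
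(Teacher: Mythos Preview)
Your overall architecture matches the paper's first proof: establish $C_{\phi,\widehat\varLambdab}\,M_\Gsf = M_\Gsf\,C_{\phi,\widehat\varLambdab}$, combine with Proposition \ref{dziekan}, and invoke \eqref{QQQ}; the pointwise computation and the ``moreover'' part are correct as you wrote them.

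The gap is in your treatment of the domain equality. The proposed route ``common core of simple sequences where $\Gsf_x$ is bounded, then close up via Proposition \ref{invitedlenie}'' does not work as stated: Proposition \ref{invitedlenie} only gives closedness of $C_{\phi,\widehat\varLambdab}$, not of the products $C_{\phi,\widehat\varLambdab}\,M_\Gsf$ or $M_\Gsf\,C_{\phi,\widehat\varLambdab}$, and neither product is known a priori to be closed, so agreement on a core does not force equality of the full (unbounded) operators. What is actually needed---and what the paper does---is a direct comparison of the two domains via the Radon--Nikodym identity \eqref{obiad}. That identity converts
\[
\sum_{x\in X}\int_{W\times S}|\widehat\lambda_x|^2\,|G_{\phi(x)}|^2\,\D\widehat\varrho_x
\quad\text{into}\quad
\sum_{z\in X}\int_{W\times S}\Gsf_z\,|G_z|^2\,\D\widehat\varrho_z
\]
for any measurable family $\bsG$. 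Applying it once with $G_z=F_z$ and once with $G_z=\Gsf_z F_z$ (after using the consistency hypothesis to replace $\widehat\lambda_x\Gsf_x$ by $\widehat\lambda_x\Gsf_{\phi(x)}$), one finds that \emph{both} domains reduce to the single condition
\[
\sum_{x\in X}\int_{W\times S}\Gsf_x^{\,3}\,|F_x|^2\,\D\widehat\varrho_x<\infty
\]
together with $\bsF\in\ell^2(\widehat\hhb)$; the intermediate conditions $\sum_x\int\Gsf_x|F_x|^2<\infty$ and $\sum_x\int\Gsf_x^{\,2}|F_x|^2<\infty$ then follow automatically from the elementary bounds $\Gsf_x\le 1+\Gsf_x^{\,3}$ and $\Gsf_x^{\,2}\le 1+\Gsf_x^{\,3}$. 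You correctly located the obstacle; the missing ingredient is \eqref{obiad}, not a core-plus-closure argument.
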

\begin{proof}
Let $\bsF\in\ell^2(\widehat\hhb)$. By definition $\bsF\in\dom(C_{\phi,\widehat\varLambdab} M_{\Gsf})$ if and only if 
\begin{align}\label{ects1}
\sum_{x\in X} \int_{W\times S} |\Gsf_x(w,s)F_x(w,s)|^2\D \widehat  \varrho_x(w,s)<\infty    
\end{align}
and
\begin{align}\label{znowukasza}
\sum_{x\in X} \int_{W\times S} |\widehat\lambda_x (w,s)\Gsf_{\phi(x)}(w,s)F_{\phi(x)}(w,s)|^2 \D \widehat\varrho_x(w,s)<\infty.    
\end{align}
On the other hand, $\bsF\in\dom(M_{\Gsf}C_{\phi,\widehat\varLambdab})$ if and only if
\begin{align}\label{student}
\sum_{x\in X} \int_{W\times S} |\widehat\lambda_x(w,s)F_{\phi(x)}(w,s)|^2 \D \widehat\varrho_x(w,s)<\infty    
\end{align}
and
\begin{align}\label{ects2}
\sum_{x\in X} \int_{W\times S} |\widehat\lambda_x (w,s)\Gsf_{x}(w,s)F_{\phi(x)}(w,s)|^2\D\widehat\varrho_x(w,s)<\infty.    
\end{align}
Using the decomposition $X=\bigsqcup_{x\in X} \phi^{-1}(\{x\})$ and applying \eqref{normaroz} and \eqref{obiad} we see that \eqref{znowukasza} is equivalent to
\begin{align}\label{ects3}
\sum_{x\in X} \int_{W\times S} |\Gsf_x(w,s)|^3 |F_x(w,s)|^2\D\widehat\varrho_x(w,s)<\infty.
\end{align}
The same argument implies that \eqref{student} is equivalent to
\begin{align}\label{ects4}
\sum_{x\in X} \int_{W\times S} |\Gsf_x(w,s)| |F_x(w,s)|^2\D\widehat\varrho_x(w,s)<\infty.
\end{align}
Keeping in mind that $\bsF\in\ell^2(\widehat\hhb)$ and using \eqref{ects1}, \eqref{ects2}, \eqref{ects3}, and \eqref{ects4} we deduce that $\dom(C_{\phi,\widehat\varLambdab} M_{\Gsf})=\dom(M_{\Gsf} C_{\phi,\widehat\varLambdab})$. It is elementary to show that for every $\bsF\in \dom(M_{\Gsf} C_{\phi,\widehat\varLambdab})$, $M_{\Gsf} C_{\phi,\widehat\varLambdab} \bsF=C_{\phi,\widehat\varLambdab} M_{\Gsf}\bsF$. Therefore, $C_{\phi,\widehat\varLambdab}$ is quasinormal by \eqref{QQQ} and Theorem \ref{dziekan}. The ``moreover'' part of the claim follows immediately from Lemma \ref{exten} and the fact that operators having quasinormal extensions are subnormal (see \cite[Theorem 2]{sto-sza2}).
\end{proof}
\section{The bounded case}
In this section we investigate the subnormality of $\cfl$ under the assumption of boundedness of $\cfl$. We use a well-know relation between subnormality and Stieltjes moment sequences.

We begin with more notation. Suppose \eqref{zemanek} holds. Let $n\in\nbb$. Then $\varLambdab^{[n]}:=\{\varLambda_x^{[n]}\colon x\in X\}$, where $\varLambda_x^{[n]}:=M_{\lambda_x^{[n]}}\in \bsl(L^2(\varrho))$ with $\lambda_x^{[n]}:=\lambda_x\cdots \lambda_{\phi^{n-1}(x)}$, $x\in X$. We define a function 
\begin{align*}
    \hsf_x^{[n]} = \sum_{y \in \phi^{-n}(\{x\})} \Big|\lambda_y^{[n]}\Big|^2,\quad x\in X.
\end{align*}
We set $\lambda_x^{[0]}\equiv 1$, so that $\varLambda_x^{[0]}$ is the identity operator, and $\hsf_x^{[0]}\equiv 1$.

It is an easy observation that the $n$th power of $\cfl$ is the o-wco with the symbol $\phi^n$ and the weight $\varLambdab^{[n]}$. We state this below fact for future reference. 
\begin{lem}\label{potega}
Suppose \eqref{zemanek} holds. Let $n \in \nbb$. If $\cfl\in\bsb(\ell^2(\hhb))$, then $\cfl^n = C_{\phi^n,\varLambdab^{[n]}}$.
\end{lem}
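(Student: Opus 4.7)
The plan is to proceed by induction on $n$. The base case $n=1$ holds by the conventions $\phi^1=\phi$ and $\lambda_x^{[1]}=\lambda_x$, which give $C_{\phi^1,\varLambdab^{[1]}}=\cfl$. For the inductive step, assume $\cfl^n=C_{\phi^n,\varLambdab^{[n]}}$. Since $\cfl\in\bsb(\ell^2(\hhb))$, the composition $\cfl^{n+1}=\cfl\circ\cfl^n$ is bounded and defined on all of $\ell^2(\hhb)$.

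Fix $\bsf\in\ell^2(\hhb)$ and $x\in X$. Using the inductive hypothesis and $\phi^n(\phi(x))=\phi^{n+1}(x)$, I would compute pointwise
\begin{align*}
(\cfl^{n+1}\bsf)_x
=\varLambda_x\bigl(\cfl^n\bsf\bigr)_{\phi(x)}
=\lambda_x\cdot\bigl(C_{\phi^n,\varLambdab^{[n]}}\bsf\bigr)_{\phi(x)}
=\lambda_x\cdot\lambda_{\phi(x)}^{[n]}\cdot f_{\phi^{n+1}(x)}.
\end{align*}
The products defining $\lambda_x^{[\cdot]}$ telescope along the orbit: $\lambda_x\cdot\lambda_{\phi(x)}^{[n]}=\lambda_x\cdot\lambda_{\phi(x)}\cdots\lambda_{\phi^n(x)}=\lambda_x^{[n+1]}$, whence $(\cfl^{n+1}\bsf)_x=\lambda_x^{[n+1]}f_{\phi^{n+1}(x)}$, which is exactly the formal action of $C_{\phi^{n+1},\varLambdab^{[n+1]}}$.

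It remains to verify that $\bsf$ actually lies in $\dom(C_{\phi^{n+1},\varLambdab^{[n+1]}})$, i.e., that $f_{\phi^{n+1}(x)}\in\dom(M_{\lambda_x^{[n+1]}})$ for every $x$ and that $\sum_{x\in X}\|\lambda_x^{[n+1]}f_{\phi^{n+1}(x)}\|^2<\infty$. Boundedness of $\cfl$ forces $\dom(M_{\lambda_x})=L^2(\varrho)$ for each $x$ (test against vectors supported on a single coordinate), so by the closed graph theorem each $M_{\lambda_x}$ is bounded, hence $\lambda_x\in L^\infty(\varrho)$, and therefore $\lambda_x^{[n+1]}\in L^\infty(\varrho)$ too. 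This gives the first domain condition automatically, while the summability follows from the identity just derived together with $\cfl^{n+1}\bsf\in\ell^2(\hhb)$. The main obstacle is essentially bookkeeping — keeping the telescoping of $\lambda_x^{[n]}$ along the iterates $\phi^k(x)$ straight and confirming that boundedness propagates to $M_{\lambda_x^{[n+1]}}$; no substantive analytic difficulty arises once these points are in hand.
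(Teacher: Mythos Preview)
Your induction is correct and is exactly the natural way to verify this lemma. The paper itself gives no proof, simply calling it ``an easy observation,'' so your argument supplies what the authors left implicit. One minor remark: the detour through $\lambda_x\in L^\infty(\varrho)$ and the closed graph theorem is not actually needed for the domain check, since the pointwise identity $(\cfl^{n+1}\bsf)_x=\lambda_x^{[n+1]}f_{\phi^{n+1}(x)}$ together with $\cfl^{n+1}\bsf\in\ell^2(\hhb)$ already forces $\lambda_x^{[n+1]}f_{\phi^{n+1}(x)}\in L^2(\varrho)$ and gives the required summability; but the argument you wrote is valid as well.
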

The well-known characterization of subnormality for bounded operators due to Lambert (see \cite{l-jlms-1976}) states that an operator $A\in\bsb(\hh)$ is subnormal if and only if $\{\|A^n f\|^2\}_{n=0}^\infty$ is a Stieltjes moment sequence for every $f\in\hh$. Recall, that a sequence $\{a_n\}_{n=0}^\infty\subseteq\rbb_+$ is called a Stieltjes moment sequence if there exists a positive Borel measure $\gamma$ on $\rbb_+$ such that $a_n=\int_{\rbb_+} t^n\D\gamma(t)$ for every $n\in\zbb_+$. We call $\gamma$ a representing measure of $\{a_n\}_{n=0}^\infty$. If there exists a unique representing measure, then we say that $\{a_n\}_{n=0}^\infty$ is determinate. It is well-known that (see \cite{b-c-r, fug}; see also \cite[Theorem 3]{sza-1981-am}):
\begin{align}\label{sti}
\begin{minipage}{85ex}
A sequence $\{a_n\}_{n=0}^\infty\subseteq\rbb_+$ is a Stieltjes moment sequence if and only if
$$
\sum_{n,m=0}^\infty a_{n+m} \alpha(n)\overline{\alpha(m)}\Ge 0\text{ and } \sum_{n,m=1}^\infty a_{n+m+1} \alpha(n)\overline{\alpha(m)}\Ge 0,
$$
for every $\alpha\in\cbb^{(\zbb_+)}$, where $\cbb^{(\zbb_+)}$ denotes the set of all functions $\alpha\colon \zbb_+\to\cbb$ such that $\{k\in\zbb_+\colon \alpha(k)\neq 0\}$ is finite. Moreover, if  $\{a_n\}_{n=0}^\infty\subseteq\rbb_+$ is a Stieltjes moment sequence and there exists $r\in[0,\infty)$ such that 
$$a_{2n+2}\leqslant r^2a_{2n},$$ 
then $\{a_n\}_{n=0}^\infty$ is determinate and its representing measure is supported by $[0,r]$.
\end{minipage}
\end{align}
\begin{thm}\label{dyzur}
Suppose \eqref{zemanek} holds. Assume that $\cfl\in\bsb(\ell^2(\hhb))$ is subnormal. Then the following conditions hold$:$
\begin{enumerate}
\item[(i)] for every $x \in X$ and $\varrho$-a.e. $w \in W$ the sequence $\big\{\hsf_x^{[n]}(w)\big\}_{n=0}^{\infty}$ is a Stieltjes moment sequence having a unique representing measure $\theta_x^w$,
\item[(ii)] for every $x\in X$ and $\varrho$-a.e. $w\in W$, $\theta_x^w(\rbb_+)=1$ and $\theta_x^w\big(\rbb_+\setminus [0,\|\cfl\|^2]\big)=0$,
\item[(iii)] for every $x\in X$ and $\varrho$-a.e. $w\in W$ we have
\begin{align}
\int_\sigma t\D\theta_x^w=\sum_{y\in \phi^{-1}(\{x\})} |\lambda_y(w)|^2 \theta_y^w (\sigma),\quad \sigma\in \borel{\rbb_+} \label{zgodne}
\end{align}
\end{enumerate}  
\end{thm}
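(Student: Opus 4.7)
The plan is to reduce the statement to a pointwise (in $w$) Stieltjes moment problem via Lambert's characterization of subnormality for bounded operators. Lambert's theorem gives that subnormality of $\cfl\in\bsb(\ell^2(\hhb))$ is equivalent to $\{\|\cfl^n\bsf\|^2\}_{n=0}^\infty$ being a Stieltjes moment sequence for every $\bsf\in\ell^2(\hhb)$. Fix $x_0\in X$ and a set $A\in\ascr$ with $\varrho(A)<\infty$, and take for $\bsf$ the sequence whose only nonzero coordinate is $f_{x_0}=\chi_A$. Lemma \ref{potega} together with $(\cfl^n\bsf)_x=\lambda_x^{[n]} f_{\phi^n(x)}$ then yields
\begin{align*}
\|\cfl^n\bsf\|^2=\sum_{x\in\phi^{-n}(\{x_0\})}\int_A|\lambda_x^{[n]}(w)|^2\D\varrho(w)=\int_A \hsf_{x_0}^{[n]}(w)\D\varrho(w).
\end{align*}

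I would then apply both inequalities in \eqref{sti} to the sequence $a_n=\int_A\hsf_{x_0}^{[n]}\D\varrho$. Because the test sequences $\alpha\in\cbb^{(\zbb_+)}$ are finitely supported, the associated finite sums pass through the integral over $A$; the conclusion is that for every $A$ of finite measure
\begin{align*}
\int_A\Big(\sum_{n,m}\hsf_{x_0}^{[n+m]}(w)\alpha(n)\overline{\alpha(m)}\Big)\D\varrho(w)\geqslant 0
\end{align*}
and likewise for the shifted version. The exceptional $\varrho$-null set depends a priori on $\alpha$ and on $x_0$, but by restricting $\alpha$ to sequences with Gaussian-rational entries (a countable dense subset) and enumerating $x_0$ over the countable set $X$, I can collect a single $\varrho$-null set $N$ outside which both positivity conditions of \eqref{sti} hold for every $x\in X$ and every rational $\alpha$, and hence for every $\alpha\in\cbb^{(\zbb_+)}$ by continuity. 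This yields the existence of a representing measure $\theta_x^w$, which is a probability measure because $\hsf_x^{[0]}(w)\equiv 1$.

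For determinacy and the support bound, the inequality $\|\cfl^n\bsf\|^2\leqslant\|\cfl\|^{2n}\|\bsf\|^2$ applied on the same test functions gives $\hsf_x^{[n]}(w)\leqslant\|\cfl\|^{2n}$ for all $x,n$ off a further (countable union of) $\varrho$-null sets. Using the partition $\phi^{-(n+1)}(\{x\})=\bigsqcup_{y\in\phi^{-n}(\{x\})}\phi^{-1}(\{y\})$ and $\lambda_z^{[n+1]}=\lambda_z^{[n]}\lambda_{\phi^n(z)}$ I obtain
\begin{align*}
\hsf_x^{[n+1]}(w)=\sum_{y\in\phi^{-n}(\{x\})}|\lambda_y^{[n]}(w)|^2\hsf_y^{[1]}(w)\leqslant\|\cfl\|^2\hsf_x^{[n]}(w),
\end{align*}
so $\hsf_x^{[2n+2]}(w)\leqslant\|\cfl\|^4\hsf_x^{[2n]}(w)$, and the last clause of \eqref{sti} with $r=\|\cfl\|^2$ supplies both determinacy of $\theta_x^w$ and the support containment $\theta_x^w(\rbb_+\setminus[0,\|\cfl\|^2])=0$, completing (i) and (ii).

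For (iii), the complementary decomposition $\phi^{-(n+1)}(\{x\})=\bigsqcup_{y\in\phi^{-1}(\{x\})}\phi^{-n}(\{y\})$ combined with $\lambda_z^{[n+1]}=\lambda_z\lambda_{\phi(z)}^{[n]}$ yields the recursion
\begin{align*}
\hsf_x^{[n+1]}(w)=\sum_{y\in\phi^{-1}(\{x\})}|\lambda_y(w)|^2\hsf_y^{[n]}(w).
\end{align*}
Translated into moments, this says that the two finite Borel measures $t\,\D\theta_x^w(t)$ and $\sum_{y\in\phi^{-1}(\{x\})}|\lambda_y(w)|^2\theta_y^w$ — both supported in the compact interval $[0,\|\cfl\|^2]$ — share every moment; determinacy on a compact interval forces them to coincide, which is exactly \eqref{zgodne}. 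The main obstacle is the careful bookkeeping of $\varrho$-null exceptional sets in the passage from the integrated Lambert condition to the pointwise statement: such sets depend a priori on $\alpha$, on $x$, and on the rank of the inequality, and the whole argument hinges on isolating a single good set by restricting to countable dense families (Gaussian-rational $\alpha$'s, the countable index set $X$) before extending continuously in $\alpha$.
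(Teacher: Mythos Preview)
Your proposal is correct and follows essentially the same route as the paper: Lambert's characterization, test vectors supported at a single coordinate, passing the positivity conditions of \eqref{sti} through the integral to obtain the pointwise Stieltjes property, then the growth bound for determinacy/support, and finally the one-step recursion for (iii). Two minor remarks: you are more careful than the paper about collecting the $\varrho$-null exceptional sets via a countable dense family of test sequences $\alpha$, which is a genuine improvement in rigor; and in your derivation of (iii) the identity that matches the decomposition $\phi^{-(n+1)}(\{x\})=\bigsqcup_{y\in\phi^{-1}(\{x\})}\phi^{-n}(\{y\})$ is $\lambda_z^{[n+1]}=\lambda_z^{[n]}\lambda_{\phi^n(z)}$ (with $\phi^n(z)=y$), not $\lambda_z^{[n+1]}=\lambda_z\lambda_{\phi(z)}^{[n]}$ --- the latter naturally yields the other recursion you used for (ii). Both identities are true and your stated recursion is correct, so this is only a labeling slip.
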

\begin{proof}
First note that by Lemma \ref{potega} we have
\begin{align}
\| \cfl^n \bsf\|^2 &= \sum_{x \in X} \int_W |\lambda_x^{[n]}(w)|^2 |f_{\phi^n(x)}(w)|^2 \D \varrho(w)\notag \\ 
&= \sum_{x \in X} \sum_{y \in \phi^{-n}(\{x\})} \int_W |\lambda_y^{[n]}(w)|^2 |f_x(w)|^2 \D \varrho(w)\notag\\
&= \sum_{x \in X} \int_W \hsf_x^{[n]}(w) |f_x(w)|^2 \D \varrho(w)\notag\\
&= \sum_{x \in X} \int_W \hsf_x^{[n]}(w) |f_x(w)|^2 \D \varrho(w),\quad n\in\zbb_+, \bsf\in\ell^2(\hhb).\label{norma}
\end{align}
Fix $x_0 \in X$ and consider $\bsg\in\ell^2(\hhb)$ such that $g_x = \delta_{x,x_0} g_x$, $x\in X$. Then, by the Lambert theorem,  $\big\{ \| \cfl^n \bsg\|^2\big\}_{n=0}^\infty$ is a Stieltjes moment sequence. Moreover, by \eqref{norma}, we get
\begin{align*}
\| \cfl^n \bsg\|^2 = \int_W \hsf_{x_0}^{[n]}(w) |g_{x_0}(w)|^2 \D \varrho(w), \quad n\in\zbb_+.
\end{align*}
Now, by \eqref{sti}, we have
\begin{align*}
\int_W \bigg(\sum_{m,n \in \zbb_+} \hsf_{x_0}^{[n+m]}(w)  &\alpha(n) \overline{\alpha(m)} \bigg) |g_{x_0}(w)|^2\D \varrho(w) \\
&= \sum_{m,n \in \zbb_+}
\bigg(\int_W  \hsf_{x_0}^{[n+m]}(w)  |g_{x_0}(w)|^2\D \varrho(w)\bigg) \alpha(n) \overline{\alpha(m)} \\
&= \sum_{m,n \in \zbb_+}  \| \cfl^{n+m} \bsg\|^2 \alpha(n) \overline{\alpha(m)}\\  
&\geq 0,\quad \alpha\in \cbb^{(\zbb_+)}.
\end{align*}
In a similar fashion we show that
\begin{align*}
\int_W \bigg(\sum_{m,n \in \zbb_+} \hsf_{x_0}^{[n+m+1]}(w)  \alpha(n) \overline{\alpha(m)} \bigg) |g_{x_0}(w)|^2\D \varrho(w) 
\geq 0,\quad \alpha\in \cbb^{(\zbb_+)}.
\end{align*}
Since $g_{x_0}\in L^2(\varrho)$ may be arbitrary, combining the above inequalities with \eqref{sti}, we deduce that for $\varrho$-a.e. $w\in W$,  $\big\{\hsf_{x_0}^{[n]}(w)\big\}_{n=0}^\infty$ is a Stieltjes moment sequence.

Now, for any fixed $x_0$ we observe that by \eqref{norma} for every $f\in L^2(\varrho)$ we have 
\begin{align*}
\int_W\hsf^{[2(n+1)]}_{x_0}(w)|f(w)|^2\D\varrho (w)&= \|\cfl^{2n+2}\bsf\|^2\\
&\leqslant \|\cfl\|^4\|\cfl^{2n}\bsf\|^2\\ &=\|\cfl\|^4\int_W\hsf^{[2n]}_{x_0}(w)|f(w)|^2\D\varrho(w),\quad n\in\zbb_+,
\end{align*}
where $\bsf\in\ell^2(\hhb)$ is given by $f_x=\delta_{x,x_0}f$. This, according to \eqref{sti}, yields that for $\varrho$-a.e. $w\in W$, $\big\{\hsf^{[n]}_{x_0}(w)\big\}_{n=0}^\infty$ has a unique representing measure $\theta_{x_0}^w$ supported by the interval $[0,\|\cfl\|^2]$. Clearly, for $\varrho$-a.e. $w\in W$, $\theta_{x_0}^w(\rbb_+) = \hsf_{x_0}^{[0]}(w)=1$.

Now, suppose that $x\in X$. Then for $\varrho$-a.e. $w\in W$ we get
\begin{align*}
\int_0^{\infty} t^n \D \theta_{x}^w(t) 
&= \hsf^{[n]}_{x}(w)  
= \sum_{y \in \phi^{-n}(\{x\})} |\lambda_y^{[n]}(w)|^2 \\
&=\sum_{y \in \phi^{-(n-1)}(\phi^{-1}(\{x\}))} |\lambda_y^{[n]}(w)|^2 \\
&= \sum_{z \in \phi^{-1}(\{x\}) } \sum_{y \in \phi^{-(n-1)}(\{z\})}  |\lambda_y^{[n-1]}(w)|^2 |\lambda_{z}(w)|^2 \\
&= \sum_{z \in \phi^{-1}(\{x\}) }  \hsf^{[n-1]}_{z}(w) |\lambda_z(w)|^2\\
&= \int_0^{\infty} t^{n-1} \bigg(\sum_{z \in \phi^{-1}(\{x\})} |\lambda_z(w)|^2\bigg) \D\theta_z^w(t),\quad n\in\nbb.
\end{align*}
This, in view of the fact that $t\D\theta_x^w$ is supported by $[0,\|\cfl\|^2]$, implies that \eqref{zgodne} is satisfied. This completes the proof.
\end{proof}
The representing measures $\theta_x^w$ existing for a subnormal bounded $\cfl$ by the above theorem turn out to be the building blocks for the family $\{\vartheta_x^w\colon x\in X, w\in W\}$.
\begin{thm}\label{konieczny}
Suppose \eqref{zemanek} holds. Assume that $\cfl\in\bsb(\ell^2(\hhb))$ is subnormal. Then there exists a family $\{\vartheta_x^w\colon x\in X, w\in W\}$ of Borel probability measures on $\rbb_+$ such that the following conditions hold:
\begin{enumerate}
\item[(i)] for all $x\in X$ and $\sigma\in\borel{\rbb_+}$ the map $W\ni w\mapsto \vartheta^w_x(\sigma)\in[0,1]$ is $\ascr$-measurable,
\item[(ii)] for all $x\in X$ and $w\in W$ we have $|\lambda_x(w)|^2\vartheta_x^w\ll \vartheta_{\phi(x)}^w$,
\item[(iii)] for every $x\in X$,
\begin{align*}
\Gsf_x =\Gsf_{\phi(x)}\quad  \text{a.e. $[\widehat\varrho_x]$},
\end{align*}
where $\Gsf_x$ is defined by \eqref{tlustazupa} $($see also \eqref{l22} and Lemma \ref{warunekC}$)$.
\end{enumerate}
\end{thm}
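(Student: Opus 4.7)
The plan is to take $\vartheta_x^w := \theta_x^w$, the representing measure furnished by Theorem \ref{dyzur}. Each of the three conditions is then verified separately, with (ii) and (iii) following almost immediately from the consistency relation \eqref{zgodne}, while (i) requires the main technical effort.

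For (ii), apply \eqref{zgodne} with $\phi(x)$ in place of $x$ (valid since $\phi(x)\in\phi(X)$): for every $\sigma\in\borel{\rbb_+}$,
\begin{align*}
|\lambda_x(w)|^2\theta_x^w(\sigma)\;\Le\;\sum_{y\in\phi^{-1}(\{\phi(x)\})}|\lambda_y(w)|^2\theta_y^w(\sigma)\;=\;\int_\sigma t\,\D\theta_{\phi(x)}^w(t).
\end{align*}
If $\theta_{\phi(x)}^w(\sigma)=0$, the right-hand side vanishes, whence $|\lambda_x(w)|^2\theta_x^w(\sigma)=0$.

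For (iii), I would combine \eqref{zgodne} with Lemma \ref{warunekC}(ii). When $x\in\phi(X)$, the identity $\sum_{y\in\phi^{-1}(\{x\})}|\lambda_y(w)|^2\theta_y^w=t\,\D\theta_x^w$ recognises the sum of Radon--Nikodym derivatives defining $\Gsf_x^w(s)$ as the function $s\mapsto s$, $\theta_x^w$-a.e.; the same argument applied to $\phi(x)$ yields $\Gsf_{\phi(x)}^w(s)=s$, $\theta_{\phi(x)}^w$-a.e. Choosing the common representative $s\mapsto s$ for both derivatives gives $\Gsf_x=\Gsf_{\phi(x)}$ identically. When $x\notin\phi(X)$, $\phi^{-n}(\{x\})=\varnothing$ for $n\Ge 1$, so the moment sequence $\{\hsf_x^{[n]}(w)\}$ is $(1,0,0,\dots)$, forcing $\theta_x^w=\delta_0$ and $\widehat\varrho_x$ to be concentrated on $W\times\{0\}$. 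There, $\Gsf_x\equiv 0$ (by its very definition) coincides with $\Gsf_{\phi(x)}(w,0)=0$ under the canonical representative.

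The main obstacle is (i). Here I would exploit the compact support of $\theta_x^w$ in $[0,\|\cfl\|^2]$ and its unique determination by the moments $\hsf_x^{[n]}(w)$, each of which is $\ascr$-measurable in $w$ by construction. This already makes $w\mapsto\int_{\rbb_+} p\,\D\theta_x^w$ $\ascr$-measurable for every polynomial $p$; Stone--Weierstrass combined with the uniform bound $\theta_x^w(\rbb_+)=1$ upgrades the measurability to $w\mapsto\int_{\rbb_+} f\,\D\theta_x^w$ for every continuous $f$ on $[0,\|\cfl\|^2]$. Approximating $\chi_U$ for open $U\subseteq[0,\|\cfl\|^2]$ by a bounded increasing sequence of continuous functions then yields measurability of $w\mapsto\theta_x^w(U)$ via monotone convergence; a Dynkin $\pi$--$\lambda$ argument extends this to all $\sigma\in\borel{\rbb_+}$, completing (i).
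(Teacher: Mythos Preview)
Your approach matches the paper's: take $\vartheta_x^w$ to be the representing measure $\theta_x^w$ from Theorem~\ref{dyzur}, then verify (i)--(iii). Your argument for (i) via Stone--Weierstrass and a $\pi$--$\lambda$ system is precisely the content of the external result the paper invokes (\cite[Lemma~11]{b-j-j-s-jfa-2015}), and your arguments for (ii) and (iii) via \eqref{zgodne} (showing in particular that $\Gsf_x(w,t)=t$) coincide with the paper's.

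There is one gap. Theorem~\ref{dyzur} furnishes $\theta_x^w$ only for $\varrho$-a.e.\ $w\in W$, while the statement requires $\vartheta_x^w$ to be defined for \emph{every} $w\in W$, and condition~(ii) must hold for every such $w$. Your assignment $\vartheta_x^w:=\theta_x^w$ leaves the measures undefined on an exceptional $\varrho$-null set and, even where defined, your verification of (ii) relies on \eqref{zgodne}, which itself is only asserted $\varrho$-a.e. The paper repairs this by first using the countability of $X$ to pass to a single set $W_0\in\ascr$ with $\varrho(W\setminus W_0)=0$ on which all the conclusions of Theorem~\ref{dyzur} hold simultaneously for every $x\in X$, and then declaring $\vartheta_x^w:=\delta_0$ for $w\in W\setminus W_0$. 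With this patch, (ii) holds trivially on $W\setminus W_0$ since $|\lambda_x(w)|^2\delta_0\ll\delta_0$, and the measurability in (i) is unaffected because $W_0\in\ascr$. Once you insert this correction, your proof is complete and equivalent to the paper's.
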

\begin{proof}
According to Theorem \ref{dyzur} there exist a set $W_0\in\ascr$ and a family $\{\theta_x^w\colon w \in W_0\}$ of Borel probability measures on $\rbb_+$ such that $\varrho(W\setminus W_0)=0$ and for all $x\in X$ and $w\in W_0$ the condition \eqref{zgodne} holds. Define a family $\{\vartheta_x^w\colon x \in X\}$ of Borel probability measures  by
\begin{align*}
    \vartheta_x^w = \left\{ 
    \begin{array}{cl} 
    \theta_x^w & \text{ for }x\in X, w \in W_0,\\
    \delta_0 & \text{ for }x\in X, w \in W\setminus W_0.
    \end{array}\right.
\end{align*}
In view of (i) of Theorem \ref{dyzur}, the mapping $W\ni w\to \int_{\rbb^+}t^n\D \vartheta_x^w \in\rbb_+$ is $\ascr$-measurable for every $x\in X$, hence applying \cite[Lemma 11]{b-j-j-s-jfa-2015} we get (i). In turn (iii) of Theorem \ref{dyzur} yields (ii). Now, by \eqref{zgodne} and Lemma \ref{warunekC}, for every $x\in X$,  $\varrho$-a.e. $w\in W$ and $\vartheta_x^w$-a.e. $t\in \rbb_+$ we have $\Gsf_x(w,t)=t$, which gives (iii).
\end{proof}
\section{Examples and corollaries}
The operator $M_z$ of multiplication by the independent variable $z$ plays a special role among all multiplication operators. It is easily seen that the weighted bilateral shift operator acting in $\bigoplus_{n=-\infty}^\infty L^2(\varrho)$, the orthogonal sum of $\aleph_0$-copies of $L^2(\varrho)$, with weights being equal to $M_z$ is normal (and thus subnormal). Below we show a more general result stating that for any given $k\in\nbb$ the o-wco $\cfl$ induced by $\phi$ whose graph is a $k$-ary tree (see \cite{b-j-j-s-golf} for terminology) and $\varLambdab$ consists of $\varLambda_x=M_z$ acting in $L^2(\varrho)$ is subnormal.
\begin{exa}
Fix $k\in\nbb$. Let $X=\{1,2,\ldots, k\}^\nbb$ and $\phi\colon X\to X$ be given by $\phi \big(\{\varepsilon_i\}_{i=1}^\infty\big)=\{\varepsilon_{i+1}\}_{i=1}^\infty$. Let $W$ be a compact subset of $\cbb$ and $\varrho$ be a Borel measure on $W$. Finally, let $\hhb=\{\hh_x\colon x\in X\}$ with $\hh_x=L^2(\varrho)$ and let $\varLambdab=\{\varLambda_x\colon x\in X\}$ with $\varLambda_x=M_z$ acting in $L^2(\varrho)$. Then we have $\hsf_x^{[n]}(w)=k^n|w^n|^2$ for every $w\in W$ and $n\in\zbb_+$. This means that $\big\{\hsf_x^{[n]}(w)\big\}_{n=0}^\infty$ is a Stieltjes moment sequence with a unique representing measure $\vartheta_x^w:=\delta_{k|w|^2}$ for every $x\in X$ and $w\in W$. Therefore, conditions \eqref{zemanek} and \eqref{majdak} are satisfied, and $\Gsf_x=\Gsf_{\phi(x)}=k$ for every $x\in X$. According to Theorem \ref{kryterium}, $\cfl$ is subnormal in $\ell^2(\hhb)$.
\end{exa}
A classical weighted unilateral shift in $\ell^2(\zbb_+)$ is subnormal whenever the weights staisfy the well-known Berger-Gellar-Wallen criterion (see \cite{g-w-1970-pja, h-1970-bams}). This can be generalized in the following way.
\begin{exa}
Let $X=\zbb_+$ and $\phi\colon X\to X$ be given by
\begin{align*}
\phi(n)=\left\{ 
\begin{array}{cl} 
0 & \text{if } n=0,\\
n-1 & \text{if } n\in\nbb.
\end{array} \right.    
\end{align*}
Let $(W,\ascr,\varrho)$ be a $\sigma$-finite measure space and let $\hhb=\{\hh_n\colon n\in \zbb_+\}$ with $\hh_n=L^2(\varrho)$. Suppose $\{\lambda_n\}_{n=1}^\infty\subseteq\mscr(\ascr)$ is a family of functions such that for every $w\in W$, the sequence 
\begin{align*}
{\boldsymbol s}^w=(1,|\lambda_1(w)|^2, |\lambda_1(w)\lambda_2(w)|^2, |\lambda_1(w)\lambda_2(w)\lambda_3(w)|^2,\ldots)    
\end{align*} 
is a Stieltjes moment sequence. Set $\lambda_0\equiv 0$. Let $\varLambdab=\{M_{\lambda_n}\colon n\in\zbb_+\}$. Then the o-wco $\cfl$ in $\ell^2(\hhb)$ is subnormal. Indeed, fix $w\in W$. Since ${\boldsymbol s}^w$ is a Stieltjes moment sequence, either $\lambda_k^w=0$ for every $k\in\nbb$ or $\lambda_k^w\neq0$ for every $k\in\nbb$. Let $\theta^w$ be a representing measure of ${\boldsymbol s}^w$. If $\lambda_k^w=0$ for every $k\in\nbb$, then we set $\vartheta_l^w=\delta_0$ for $l\in\zbb_0$. Otherwise, we define a family of probability measures $\{\vartheta_l^w\colon l\in\zbb_+\}$ by
\begin{align*}
\vartheta_l^w(\sigma)=
\left\{
\begin{array}{cl}
\theta^w (\sigma) & \text{if } l=0,\\
\frac{1}{|\lambda_l(w)|^2}\int_\sigma t\D\vartheta_{l-1}^w(t) & \text{if } l\in\nbb,
\end{array} \quad \sigma\in\borel{\rbb_+}.
\right.
\end{align*}
In both cases we see that
\begin{align}\label{procesja}
\int_\sigma t\D\vartheta_l^w(t)=|\lambda_{l+1}(w)|^2 \vartheta_{l+1}^w(\sigma),\quad \sigma\in\borel{\rbb_+},\ l\in\zbb_+.
\end{align}
As a consequence, the family $\{\vartheta_k^w\colon w\in W, k\in\zbb_+\}$ satisfies condition $(\texttt{B})$. Since the mapping $w\mapsto \int_{\rbb_+}t^n\D\theta^w(t)=|\lambda_1(w)\cdots\lambda_{n}(w)|^2$ is $\ascr$-measurable for every $n\in\nbb$, by \cite[Lemma 11]{b-j-j-s-jfa-2015}, the mapping $w\mapsto\vartheta_0^w(\sigma)$ is $\ascr$-measurable for every $\sigma\in\borel{\rbb_+}$. This implies that $\{\vartheta_k^w\colon w\in W, k\in\zbb_+\}$ satisfies condition $(\texttt{A})$. In view of \eqref{procesja}, $\Gsf_l(w,t)=t$ for all $(w,t)\in W\times \rbb_+$ and $l\in\zbb_+$. Therefore, by Theorem \ref{kryterium}, $\cfl$ is subnormal.
\end{exa}
The class of weighted shifts on directed trees with one branching vertex has proven to be a source of interesting results and examples (see \cite{b-j-j-s-jmaa-2013, b-d-j-s-aaa-2013, b-j-j-s-jmaa-2016}). Below we show an example of a subnormal o-wco $\cfl$ induced by a transformation $\phi$ whose graph is composed of a directed tree with one branching vertex and a loop.
\begin{exa}
Fix $k\in\nbb \cup \{\infty\}$. Let $X = \{(0,0)\} \cup \nbb \times \{1,2,\ldots, k\}$. Let $\phi\colon X\to X$ be given by
\begin{align*}
\phi(m,n)=\left\{ 
\begin{array}{cl} 
(0,0) & \text{if } m =0 ,\\
(0,0) & \text{if } m=1 \text { and } n\in\{1,\ldots, k\},\\
(m-1,n) & \text{if } m \geq 2 \text { and } n\in\{1,\ldots, k\}.
\end{array} \right.    
\end{align*}
Let $W$ be a Borel subset of $\cbb$ and $\varrho$ be a Borel measure on $W$. Let $\hhb=\{\hh_x\colon x\in X\}$ with $\hh_x=L^2(\varrho)$. For a given sequence $\{\beta_n\}_{n=1}^{k} \subset \cbb$ such that $\sum_{n=1}^{k} |\beta_n|^2 < \infty$ we define functions $\{\lambda_x\colon x\in X\}\subseteq\mscr(\borel{W})$ by 
\begin{align*}
\lambda_x(w)=\left\{ 
\begin{array}{cl} 
0 & \text{if } x = (0,0),\\
\beta_n  & \text{if } x = (1,n), \\
\sqrt{\sum_{k=1}^n \beta_n^2} & \text{otherwise},
\end{array} \right.\quad w\in W.
\end{align*}
Let $\varLambdab=\{\varLambda_x\colon x\in X\}$ with $\varLambda_x=M_{\lambda_x}$ acting in $L^2(\varrho)$. Finally, let $S=[0,1]$ and $\vartheta_x^w$, $x\in X$ and $w\in W$, be the Lebesgue measure on $S$. Clearly, for every $x \in X$, $\Gsf_x = \sum_{n=1}^{k} \beta_n^2$. Thus by Theorem \ref{kryterium} the operator $\cfl$ is subnormal.
\end{exa}
It is well known that normal operators are, up to a unitary equivalence, multiplication operators. This combined with our criterion can be used to investigate subnormality of $\cfl$ when $\varLambdab$ consists of commuting normal operators.
\begin{exa}
Let $X$ be countable and $\phi\colon X\to X$. Assume that $\hh$ is a separable Hilbert space,  $\hhb=\{\hh_x\colon x\in X\}$ with $\hh_x=\hh$, and  $\varLambdab=\{\varLambda_x\colon x\in X\}\subseteq \bsl(\hh)$ is a family of commuting normal operators. Then there exist a $\sigma$-finite measure space $(W,\ascr,\varrho)$ and a family $\{\lambda_x\colon x\in X\}\subseteq\mscr(\ascr)$ such that for every $x\in X$, $\varLambda_x$ is unitarily equivalent to the operator $M_{\lambda_x}$ of multiplication by $\lambda_x$ acting in $L^2(\varrho)$. Suppose now that there exists a family $\{\vartheta_x^w\colon x\in X, w\in W\}$ of probability measures on a measurable space $(S,\varSigma)$, such that conditions $(\mathtt{A})$ and $(\mathtt{B})$ are satisfied. If for every $x\in X$, $\rho$-a.e. $w\in \cbb$, and $\vartheta_x^w$-a.e. $s\in S$ we have
\begin{align}\label{piwo1}
\sum_{y\in\phi^{-1}(\{x\})}\frac{\D|\lambda_y(w) |^2\vartheta_y^w}{\D\vartheta_x^w}<\infty,
\end{align}
and
\begin{align}\label{piwo2}
\sum_{y\in\phi^{-1}(\{x\})}\frac{\D|\lambda_y(w) |^2\vartheta_y^w}{\D\vartheta_x^w}=\sum_{z\in\phi^{-1}(\{\phi(x)\})}\frac{\D|\lambda_z(w) |^2\vartheta_z^w}{\D\vartheta_{\phi(x)}^w},
\end{align}
then, by applying Theorem \ref{kryterium} and Lemma \ref{warunekC}, we deduce that $\cfl$ acting in $\ell^2(\hhb)$ is subnormal.
\end{exa}
In a similar manner to the case of a family of commuting normal operators we can deal with $\cfl$ induced by $\varLambdab$ consisting of a single subnormal operator.
\begin{exa}
Let $X$ be countable and $\phi\colon X\to X$. Suppose that $\hh$ is a separable Hilbert space and $S$ is a subnormal operator in $\hh$. Let $\hhb=\{\hh_x\colon x\in X\}$ with $\hh_x=\hh$, and $\varLambdab=\{\varLambda_x\colon x\in X\}$ with $\varLambda_x=S$. Since $S$ is subnormal, there exists a Hilbert space $\kk$ and a normal operator $N$ in $\kk$ such that $S\subseteq N$. Let $\kkb=\{\kk_x\colon x\in X\}$ with $\kk_x=\kk$, and $\varLambdab^\prime=\{\varLambda^\prime_x\colon x\in X\}$ with $\varLambda^\prime_x=N$. Clearly, $C_{\phi,\varLambdab^\prime}$ is an extension of $\cfl$. Hence, showing that $C_{\phi,\varLambdab^\prime}$ has a quasinormal extension will yield subnormality of $\cfl$. From this point we can proceed as in the previous example. Assuming that there exists a family $\{\vartheta_x^w\colon x\in X, w\in W\}$ of probability measures on $(S,\varSigma)$ satisfying conditions $(\mathtt{A})$-$(\mathtt{B})$, and conditions \eqref{piwo1} and \eqref{piwo2} for every $x\in X$, $\rho$-a.e. $w\in \cbb$, and $\vartheta_x^w$-a.e. $s\in S$, we can show that $\cfl$ is subnormal.
\end{exa}
The method of proving subnormality via quasinormality and extending the underlying $L^2$-space with help of a family of probability measures has already been used in the context of composition operators (see \cite[Theorem 9]{b-j-j-s-jfa-2015}) and weighted composition operators (see \cite[Theorem 29]{b-j-j-s-wco}). The class of weighted composition operators in $L^2$-spaces over discrete measure spaces is contained in the class of o-wco's (see Remark \ref{wco}). Below we deduce a discrete version of \cite[Theorem 29]{b-j-j-s-wco} from Theorem \ref{kryterium}.
\begin{prop}
Let $(X,2^X,\mu)$ be a discrete measure space, $\phi$ be a self-map of $X$, and $w\colon X\to \cbb$. Suppose that there exists a family $\{Q_x\colon x\in X\}$ of Borel probability measures on $\rbb_+$ such that
\begin{align}\label{CC}
\mu_{x}\int_\sigma t\D Q_{x}(t)=\sum_{y\in\phi^{-1}(\{x\})} Q_y(\sigma) |w(y)|^2\mu_y,\quad \sigma\in \borel{\rbb_+}, x\in X.
\end{align}
and
\begin{align}\label{wolny}
\int_{\rbb_+}t\D Q_x(t)<\infty,\quad x\in X.
\end{align}
Then the weighted composition operator $C_{\phi, w}$ induced by $\phi$ and $w$ is subnormal.
\end{prop}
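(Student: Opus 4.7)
The plan is to deduce the proposition from Theorem~\ref{kryterium} after recasting $C_{\phi,w}$ as an o-wco fitting the framework of~\eqref{zemanek}.

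I would begin by translating to the operator-valued setting. By Remark~\ref{wco}, $C_{\phi,w}$ is the o-wco in $\ell^2(\{\cbb\}_{x\in X},\mu)$ with $\varLambda_x=M_{w(x)}$ acting on $\cbb$; Proposition~\ref{liczaca} then produces a unitary equivalence with the o-wco $C_{\phi,\varLambdab'}$ in $\ell^2(\hhb)$ whose scalar weights are $\lambda_x(*):=\sqrt{\mu_x/\mu_{\phi(x)}}\,w(x)$, after identifying $\cbb$ with $L^2(\varrho)$ for the one-point probability space $W=\{*\}$, $\ascr=2^{W}$, $\varrho(\{*\})=1$.

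Next I would supply the data required by~\eqref{majdak} by taking $(S,\varSigma)=(\rbb_+,\borel{\rbb_+})$ and $\vartheta_x^{*}:=Q_x$. Condition~$(\texttt{A})$ is automatic since $W$ is a singleton. For~$(\texttt{B})$, evaluating~\eqref{CC} at $\phi(x)$ and isolating the summand $y=x$ gives $|w(x)|^2\mu_x Q_x(\sigma)\Le\mu_{\phi(x)}\int_\sigma t\,\D Q_{\phi(x)}(t)$ for every $\sigma\in\borel{\rbb_+}$, which forces $|\lambda_x|^2\vartheta_x^{*}\ll\vartheta_{\phi(x)}^{*}$. The decisive step is that dividing~\eqref{CC} at $x$ by $\mu_x$ rewrites $\sum_{y\in\phi^{-1}(\{x\})}|\lambda_y(*)|^2\vartheta_y^{*}$ as the measure $s\,\D Q_x(s)$, so Lemma~\ref{warunekC} yields
\begin{align*}
\Gsf_x(*,s)=s\quad\text{a.e.\ $[\widehat\varrho_x]$ for every $x\in\phi(X)$};
\end{align*}
in particular $\Gsf_x<\infty$ a.e.\ $[\widehat\varrho_x]$.

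With this, the hypotheses of Theorem~\ref{kryterium} are immediate: for $x\in\phi(X)$ both sides of $\lambda_x\Gsf_{\phi(x)}=\lambda_x\Gsf_x$ equal $\lambda_x(*)\,s$, while for $x\notin\phi(X)$ the right-hand side of~\eqref{CC} at $x$ is an empty sum, so $Q_x=\delta_0$, and~\eqref{CC} at $\phi(x)$ tested on $\{0\}$ then forces $|w(x)|^2\mu_x=0$, hence $\lambda_x=0$ and the consistency identity collapses trivially. Theorem~\ref{kryterium} thus produces a quasinormal extension of $C_{\phi,\varLambdab'}$, and the unitary equivalence of the first paragraph transports it to a quasinormal extension of $C_{\phi,w}$, giving subnormality. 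No step looks genuinely delicate; the only bookkeeping I would flag is that~\eqref{wolny} is what secures dense definiteness of $C_{\phi,w}$, because~\eqref{CC} with $\sigma=\rbb_+$ yields $\sum_{y\in\phi^{-1}(\{x\})}|w(y)|^2\mu_y=\mu_x\int_{\rbb_+}t\,\D Q_x(t)<\infty$ for every $x\in X$.
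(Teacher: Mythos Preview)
Your argument is correct and follows essentially the same route as the paper: reduce to counting measure via Proposition~\ref{liczaca}, take $W$ to be a one-point space with $\vartheta_x^{*}=Q_x$, verify $(\texttt{A})$, $(\texttt{B})$ and the consistency condition, and invoke Theorem~\ref{kryterium}. Your treatment of the case $x\notin\phi(X)$, deducing $Q_x=\delta_0$ and then $\lambda_x=0$ from \eqref{CC}, is in fact more explicit than the paper's, which simply asserts $\Gsf_x(t)=t=\Gsf_{\phi(x)}(t)$ a.e.\ $[\vartheta_x^1]$ for every $x$ without singling out that case.
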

\begin{proof}
By Proposition \ref{liczaca}, $C_{\phi, w}$ is unitarily equivalent to the weighted composition operator $C_{\phi, \widetilde w}$ in $\ell^2(\nu)$, where $\widetilde w (x)=\sqrt{\frac{\mu_x}{\mu_{\phi(x)}}}\, w(x)$, $x\in X$, and $\nu$ is the counting measure on $2^X$. Obviously, it suffices to prove the subnormality of $C_{\phi, \widetilde w}$ now.

First we note that \eqref{wolny} and \eqref{CC} imply that $\sum_{y\in\phi^{-1}(\{x\})}|w(y)|^2\mu_y<\infty$ for every $x\in X$ (equivalently, the operator $C_{\phi, w}$ is densely defined). Using \eqref{CC} we get
\begin{align*}
\int_\sigma t\D Q_{x}(t)=\sum_{y\in\phi^{-1}(\{x\})} Q_y(\sigma) |\widetilde w (y)|^2,\quad \sigma\in \borel{\rbb_+},\ x\in X.
\end{align*}
This implies that for every $x\in X$ we have $|\widetilde w(x)|^2Q_x\ll Q_{\phi(x)}$ and 
\begin{align}\label{CCprime}
\frac{\D  \big(\sum_{y\in\phi^{-1}(\{x\})} |\widetilde w(y)|^2 Q_y\big)}{\D Q_{x}}=t\quad \text{for $Q_{x}$-a.e. $t\in \rbb_+$.}
\end{align}

Now, we set $W=\{1\}$, $\ascr=\big\{\{1\},\varnothing\big\}$, $\rho (\{1\})=1$, $\lambda_x(1)=\widetilde w (x)$, and $\vartheta_x^1=Q_x$. Then, in view of \eqref{l22}, we have
\begin{align*}
\Gsf_x^1 =\sum_{\substack{y \in \phi^{-1}(\{x\})}}   \frac{\D|\lambda_y(1)|^2 \vartheta_y^1}{\D \vartheta^1_x}
= \frac{\D \big(\sum_{y \in \phi^{-1}(\{x\})} |\widetilde w (y)|^2   Q_y\big)}{\D Q_{x}},\quad x\in X.
\end{align*}
This and \eqref{CCprime} yield $\Gsf_x(t)=t=\Gsf_{\phi(x)}(t)$ for $\vartheta^1_x$-a.e. $t\in\rbb_+$ and every $x\in X$. By Theorem \ref{kryterium} (see also Remark \ref{wco}), the operator $C_{\phi, w^\prime}$ is subnormal which completes the proof. 
\end{proof}
\section{Auxiliary results}
In this section we provide additional results concerning commutativity of a multiplication operators and o-wco's motivated by our preceding considerations. We begin with a commutativity criterion.
\begin{prop}\label{lemKom1}
Let $\{(\varOmega_x,\ascr_x,\mu_x)\colon x\in X\}$ be a family of $\sigma$-finite measure spaces and $\hhb=\{L^2(\mu_x)\colon x\in X\}$. Let $\varGammab=\{\varGamma_x\colon x\in X\}$, with $\varGamma_x\in\mscr(\ascr_x)$, satisfy $M_{\varGammab} \in \bsb(\ell^2(\hhb))$. Let $\varLambdab=\{\varLambda_x\colon x\in X\}$ be a family of operators $\varLambda_x\in\bsl(L^2(\mu_{\phi(x)}), L^2(\mu_x))$. Assume that 
\begin{align}\label{gwiazdka}
M_{\varGamma_x}\varLambda_x\subseteq\varLambda_x M_{\varGamma_{\phi(x)}},\quad x\in X.
\end{align}
Then $M_{\varGammab}\cfl \subseteq \cfl M_{\varGammab}$.
\end{prop}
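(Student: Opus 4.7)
The plan is to unpack the operator inclusion $M_{\varGammab}\cfl\subseteq \cfl M_{\varGammab}$ by comparing domains and then checking equality on the smaller domain. First I would observe that, since $M_{\varGammab}\in\bsb(\ell^2(\hhb))$, we have $\dom(M_{\varGammab})=\ell^2(\hhb)$, and consequently $\dom(M_{\varGammab}\cfl)=\dom(\cfl)$. So it suffices to fix $\bsf\in\dom(\cfl)$ and prove two things: (a) $M_{\varGammab}\bsf\in\dom(\cfl)$, and (b) $\cfl M_{\varGammab}\bsf=M_{\varGammab}\cfl\bsf$.

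Next I would extract the pointwise content of assumption \eqref{gwiazdka}. Since $M_{\varGamma_x}$ is bounded (being a diagonal block of the bounded operator $M_{\varGammab}$), the domain of $M_{\varGamma_x}\varLambda_x$ equals $\dom(\varLambda_x)$. Therefore the inclusion $M_{\varGamma_x}\varLambda_x\subseteq \varLambda_x M_{\varGamma_{\phi(x)}}$ translates into the two statements: for every $h\in\dom(\varLambda_x)$ we have $\varGamma_{\phi(x)}h\in\dom(\varLambda_x)$, and
\begin{equation*}
\varLambda_x(\varGamma_{\phi(x)}h)=\varGamma_x\varLambda_x h.
\end{equation*}
Applying this with $h=f_{\phi(x)}$, which lies in $\dom(\varLambda_x)$ because $\bsf\in\dom(\cfl)$, gives $(M_{\varGammab}\bsf)_{\phi(x)}=\varGamma_{\phi(x)}f_{\phi(x)}\in\dom(\varLambda_x)$ for every $x\in X$, i.e.\ $M_{\varGammab}\bsf\in\dom_{\varLambdab}$.

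To finish (a), I would estimate the norm-summability condition by reusing the identity above:
\begin{equation*}
\sum_{x\in X}\bigl\|\varLambda_x(M_{\varGammab}\bsf)_{\phi(x)}\bigr\|^2
=\sum_{x\in X}\bigl\|\varGamma_x\varLambda_x f_{\phi(x)}\bigr\|^2
=\|M_{\varGammab}\cfl\bsf\|^2\Le \|M_{\varGammab}\|^2\|\cfl\bsf\|^2<\infty,
\end{equation*}
which gives $M_{\varGammab}\bsf\in\dom(\cfl)$. The same chain of equalities done coordinatewise verifies (b): $(\cfl M_{\varGammab}\bsf)_x=\varLambda_x\varGamma_{\phi(x)}f_{\phi(x)}=\varGamma_x\varLambda_x f_{\phi(x)}=(M_{\varGammab}\cfl\bsf)_x$.

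The proof is essentially bookkeeping, so there is no real obstacle; the only delicate point is making sure \eqref{gwiazdka} is read correctly as a domain-invariance statement plus an intertwining identity, which is clean precisely because the boundedness of $M_{\varGammab}$ collapses $\dom(M_{\varGamma_x}\varLambda_x)$ to $\dom(\varLambda_x)$.
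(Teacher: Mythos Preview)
Your proof is correct and follows essentially the same approach as the paper's: identify $\dom(M_{\varGammab}\cfl)=\dom(\cfl)$ via boundedness of $M_{\varGammab}$, use \eqref{gwiazdka} to push $\varGamma_{\phi(x)}f_{\phi(x)}$ into $\dom(\varLambda_x)$ and obtain the intertwining identity, then bound the $\ell^2$-sum using boundedness of $M_{\varGammab}$ and verify the equality coordinatewise. The only cosmetic difference is that the paper phrases the norm estimate as ``$\varGammab$ is uniformly essentially bounded'' whereas you write it as $\|M_{\varGammab}\cfl\bsf\|\le\|M_{\varGammab}\|\,\|\cfl\bsf\|$, which is the same thing.
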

\begin{proof}
Let $\bsf \in \ell^2(\hhb)$. Since $M_{\varGammab} \in \bsb(\ell^2(\hhb))$, $\bsf\in \dom(M_{\varGammab}\cfl)$ if and only if $f_{\phi(x)}\in\dom(\varLambda_x)$ for every $x\in X$ and
\begin{align*}
    \sum_{x \in X} \int_{\varOmega_x} \big|\big(\varLambda_x f_{\phi(x)}\big)(w)\big|^2 \D \mu_x(w) <\infty.
\end{align*}
On the other hand, $\bsf \in \dom(\cfl M_{\varGammab})$ if and only if $\varGamma_{\phi(x)}f_{\phi(x)}\in\dom(\varLambda_x)$ for every $x\in X$ and
\begin{align}\label{kapusta}
    \sum_{x \in X}\int_{\varOmega_x} \big|\varLambda_x \big(\varGamma_{\phi(x)}f_{\phi(x)}\big)(w)\big|^2 \D \mu_x(w) <\infty.
\end{align}
Now,  if $\bsf \in \dom(M_{\varGammab}\cfl)$, then, by \eqref{gwiazdka}, $\varGamma_{\phi(x)}f_{\phi(x)}\in\dom(\varLambda_x)$ for every $x\in X$. Moreover, since $M_{\varGammab} \in \bsb(\ell^2(\hhb))$ implies that $\varGammab$ is uniformly essentially bounded, we see that 
\begin{align*}
    \sum_{x \in X}\int_{\varOmega_x} \big|\varGamma_x(w) \big(\varLambda_x f_{\phi(x)}\big)(w)\big|^2 \D \mu_x(w) <\infty,
\end{align*}
which, by \eqref{gwiazdka}, implies \eqref{kapusta}. Thus $\dom(M_{\varGammab} \cfl)\subseteq \dom(\cfl M_{\varGammab})$. This and \eqref{gwiazdka} yields
\begin{align*}
\big(M_{\varGammab} \cfl \bsf\big)_x
&=\varGamma_x \big(\cfl \bsf\big)_x =\varGamma_x \varLambda_x f_{\phi(x)}=\varLambda_x \big(\varGamma_{\phi(x)}  f_{\phi(x)}\big)\\
&= \varLambda_x \big(M_{\varGammab} \bsf\big)_{\phi(x)}=\big(\cfl M_{\varGammab} \bsf\big)_x,\quad x\in X,\quad \bsf\in \dom(M_{\varGammab}\cfl),
\end{align*}
which completes the proof.
\end{proof}
\begin{rem}
It is worth noticing that if  $\{(\varOmega_x,\ascr_x,\mu_x)\colon x\in X\}$,  $\hhb$, $\varGammab$, and $\varLambdab$ are as in Proposition \ref{lemKom1}, then $M_{\varGammab} \cfl\subseteq \cfl M_{\varGammab}$ implies $M_{\varGamma_{x}}\varLambda_{x}|_{\dom(C_{\phi, \varLambdab,\phi(x)})}\subseteq\varLambda_{x} M_{\varGamma_{\phi(x)}}$ for every $x\in X$. This can be easily prove by comparing $\big(M_{\varGammab} \cfl \bsf\big)_x$ and $\big(\cfl M_{\varGammab} \bsf\big)_x$ for $\bsf\in\ell^2(\hhb)$ given by $f_{y}=\delta_{y,\phi(x)}g$, where $g\in \dom(C_{\phi,\varLambdab,\phi(x)})$ (see the last part of the proof of Proposition \ref{lemKom1}). 
\end{rem}
In view of our previous investigations it seems natural to ask under what conditions the inclusion in $\cfl M_{\varGammab} \subseteq M_{\varGammab} \cfl$ can be replaced by the equality. Below we propose an answer when $\varLambdab$ consists of multiplication operators.
\begin{prop}\label{quasilem1}
Let $\{(\varOmega,\ascr,\mu_x)\colon x\in X\}$ be a family of $\sigma$-finite measure spaces. Let $\varGammab=\{\varGamma_x\colon x\in X\}\subseteq \mscr(\ascr)$ and $\{\lambda_x\colon x\in X\}\subseteq \mscr(\ascr)$. Suppose that $|\lambda_x|^2\mu_x\ll\mu_{\phi(x)}$ for every $x\in X$. Let $\hhb=\{L^2(\mu_x)\colon x\in X\}$ and $\varLambdab=\{\varLambda_x\colon x\in X\}$ with $\varLambda_x=M_{\lambda_x}\in\bsl(\hh_{\phi(x)}, \hh_x)$. Assume that $H_x:=|\varGamma_x|+\sum_{y\in \phi^{-1}(\{x\})}\frac{\D|\lambda_y|^2\mu_y}{\D\mu_{x}}<\infty$ a.e. $[\mu_{x}]$ for every $x\in X$.  Suppose that $\dom(\cfl)\subseteq \dom(M_{\varGammab})$ and  $\cfl M_{\varGammab} \subseteq M_{\varGammab} \cfl$. Then $\cfl M_{\varGammab}= M_{\varGammab} \cfl$.
\end{prop}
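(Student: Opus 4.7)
Since the inclusion $\cfl M_{\varGammab} \subseteq M_{\varGammab} \cfl$ is already part of the hypothesis, the task reduces to showing $\dom(M_{\varGammab}\cfl) \subseteq \dom(\cfl M_{\varGammab})$. My plan is to first upgrade the operator inclusion to the pointwise identity
\begin{align*}
\lambda_x\varGamma_{\phi(x)}=\varGamma_x\lambda_x\quad\text{a.e.\ }[\mu_x]\text{ for every }x\in X,
\end{align*}
and then use it to convert the abstract membership $\cfl\bsf\in\dom(M_{\varGammab})$ into $M_{\varGammab}\bsf\in\dom(\cfl)$.

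For the pointwise identity, I would fix $x_0\in X$ and use $\sigma$-finiteness of $\mu_{\phi(x_0)}$ together with the a.e.\ finiteness of $H_{\phi(x_0)}$ to choose an increasing sequence $\{E_n\}\subseteq\ascr$ with $\mu_{\phi(x_0)}(E_n)<\infty$, with $H_{\phi(x_0)}$ essentially bounded on $E_n$, and with $\bigcup_n E_n=\varOmega$ modulo a $\mu_{\phi(x_0)}$-null set. For each $n$ I take the test vector $\bsf^{(n)}\in\ell^2(\hhb)$ that equals $\chi_{E_n}$ in the coordinate $\phi(x_0)$ and vanishes elsewhere. A short calculation, using that $H_{\phi(x_0)}$ dominates both $|\varGamma_{\phi(x_0)}|$ and each of the derivatives $\tfrac{\D|\lambda_y|^2\mu_y}{\D\mu_{\phi(x_0)}}$ for $y\in\phi^{-1}(\{\phi(x_0)\})$, shows that $\bsf^{(n)}\in\dom(\cfl M_{\varGammab})$. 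The assumed inclusion then forces $\cfl M_{\varGammab}\bsf^{(n)}=M_{\varGammab}\cfl\bsf^{(n)}$, and comparing the $x_0$-th coordinates gives $\lambda_{x_0}\varGamma_{\phi(x_0)}\chi_{E_n}=\varGamma_{x_0}\lambda_{x_0}\chi_{E_n}$ a.e.\ $[\mu_{x_0}]$. Letting $n\to\infty$ yields the desired identity on $\bigcup_n E_n$, while on its $\mu_{\phi(x_0)}$-null complement the absolute continuity $|\lambda_{x_0}|^2\mu_{x_0}\ll\mu_{\phi(x_0)}$ forces $\lambda_{x_0}=0$ a.e.\ $[\mu_{x_0}]$, so both sides are automatically zero there.

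Granted the pointwise identity, the remainder is routine. If $\bsf\in\dom(M_{\varGammab}\cfl)$, then $\bsf\in\dom(\cfl)\subseteq\dom(M_{\varGammab})$ by hypothesis, and for every $x\in X$ one has $\lambda_x\varGamma_{\phi(x)}f_{\phi(x)}=\varGamma_x(\cfl\bsf)_x$ a.e.\ $[\mu_x]$. Summing the $L^2(\mu_x)$-norms of both sides gives $\|M_{\varGammab}\cfl\bsf\|^2$, which is finite because $\cfl\bsf\in\dom(M_{\varGammab})$. This simultaneously places $\varGamma_{\phi(x)}f_{\phi(x)}$ in $\dom(M_{\lambda_x})$ for every $x$ and establishes the required summability, so $M_{\varGammab}\bsf\in\dom(\cfl)$, i.e.\ $\bsf\in\dom(\cfl M_{\varGammab})$.

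The main obstacle is the pointwise step: producing enough test vectors inside $\dom(\cfl M_{\varGammab})$ without any global boundedness of $\varGamma_{\phi(x_0)}$ or of the Radon-Nikodym derivatives. The hypothesis $H_x<\infty$ a.e.\ $[\mu_x]$, which packages $|\varGamma_x|$ together with all the incoming derivatives $\tfrac{\D|\lambda_y|^2\mu_y}{\D\mu_x}$ into a single a.e.\ finite function, is precisely what allows the $\chi_{E_n}$-truncation to produce uniform bounds on $E_n$. A subsidiary subtlety is that the exhaustion $\{E_n\}$ lives modulo $\mu_{\phi(x_0)}$-null sets, while the target identity requires a $\mu_{x_0}$-null exceptional set, a gap bridged by $|\lambda_{x_0}|^2\mu_{x_0}\ll\mu_{\phi(x_0)}$.
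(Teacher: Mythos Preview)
Your proposal is correct and follows essentially the same route as the paper: both first extract the pointwise identity $\lambda_x\varGamma_{\phi(x)}=\lambda_x\varGamma_x$ a.e.\ $[\mu_x]$ by testing against vectors of the form $\chi_{E_n}$ concentrated in the $\phi(x_0)$-coordinate (the $E_n$ being chosen via $\sigma$-finiteness so that $H_{\phi(x_0)}$ is bounded on each), and then use this identity together with $\dom(\cfl)\subseteq\dom(M_{\varGammab})$ to show $\dom(M_{\varGammab}\cfl)\subseteq\dom(\cfl M_{\varGammab})$. Your explicit handling of the $\mu_{\phi(x_0)}$-null leftover of the exhaustion via $|\lambda_{x_0}|^2\mu_{x_0}\ll\mu_{\phi(x_0)}$ is a point the paper passes over silently, so if anything your write-up is slightly more careful there.
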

\begin{proof}
We first prove that  $\cfl M_{\varGammab} \subseteq M_{\varGammab} \cfl$ implies
\begin{align}\label{ups2}
\lambda_x \varGamma_x=\lambda_x \varGamma_{\phi(x)}\text{ a.e. $[\mu_x]$},\quad x\in X.
\end{align}
To this end, we fix $x_0\in X$. Since $\mu_{\phi(x_0)}$ is $\sigma$-finite and $|\varGamma_{\phi(x_0)}|+H_{\phi(x_0)}<\infty$ a.e. $[\mu_{\phi(x_0)}]$, using a standard measure-theoretic argument we show that there exists $\{\varOmega_n\}_{n=1}^\infty\subseteq{\ascr}$ such that $\varOmega=\bigcup_{n=1}^\infty \varOmega_n$ and for every $k\in\nbb$ we have $\varOmega_k\subseteq \varOmega_{k+1}$, $\mu_{\phi(x_0)}(\varOmega_k)<\infty$, and $|\varGamma_{\phi(x_0)}|+H_{\phi(x_0)}< k$ on $\varOmega_k$. Now, we consider $\bsf^{(n)}\in \ell^2(\hhb)$, $n\in\nbb$, given by $f_x^{(n)}=\delta_{x,\phi(x_0)}\chi_{\varOmega_n}$, $x\in X$. Then
\begin{align*}
\int_{\varOmega_n}|\varGamma_{\phi(x_0)}|^2\D\mu_{\phi(x_0)} <\infty \text{ and } \int_{\varOmega_n} |\varGamma_{\phi(x_0)}|^2 H_{\phi(x_0)}\D\mu_{\phi(x_0)}<\infty,\quad n\in\nbb,
\end{align*}
which yields $\bsf^{(n)}\in\dom(\cfl M_{\varGammab})$ for every $n\in\nbb$. Consequently, $\bsf^{(n)}\in\dom(M_{\varGammab} \cfl)$ for every $n\in\nbb$. Now, by comparing $M_{\varGammab} \cfl \bsf^{(n)}$ and $\cfl M_{\varGammab} \bsf^{(n)}$, we get $\lambda_x \varGamma_x \chi_{\varOmega_n}= \lambda_x \varGamma_{\phi(x)}\chi_{\varOmega_n}$ a.e. $[\mu_{x}]$ for every $x\in X$ such that $\phi(x)=\phi(x_0)$ and every $n\in\nbb$ (see the last part of the proof of Proposition \ref{lemKom1}). Since $\varOmega=\bigcup_{n=1}^\infty\varOmega_n$, we get the equality in \eqref{ups2} for every $x\in X$ such that $\phi(x)=\phi(x_0)$. Considering all possible choices of $x_0\in X$ we deduce \eqref{ups2}.

Now, let $\bsf \in \dom(M_{\varGammab} \cfl)$. Then $\bsf \in \dom( \cfl)\subseteq \dom(M_{\varGammab})$ and
$$ \sum_{x \in X} \; \int_\varOmega |\lambda_x\varGamma_x|^2 |f_{\phi(x)}|^2 \D \mu_x < \infty.$$
This combined with \eqref{ups2} imply that $\bsf\in \dom(\cfl M_{\varGammab})$. Hence $\dom(M_{\varGammab} \cfl)=\dom(\cfl M_{\varGammab})$ which, in view of $\cfl M_{\varGammab} \subseteq M_{\varGammab} \cfl$, proves the claim.
\end{proof}
\begin{cor}\label{tikitaki}
Let $\{(\varOmega, \bscr,\mu_x)\colon x\in X\}$ be a family of $\sigma$-finite measure spaces. Let $\{\xi_x\colon x\in X\}\subseteq \mscr(\bscr)$. Suppose that $|\xi_x|^2\mu_x\ll\mu_{\phi(x)}$ for every $x\in X$. Let $\hhb=\{L^2(\mu_x)\colon x\in X\}$. Assume that $\varGammab=\{\varGamma_x\colon x\in X\}$ is a family of functions $\varGamma_x\in\mscr(\bscr)$ such that $\varGamma_x=\varGamma_{\phi(x)}$ a.e. $[\mu_x]$ and $z_0-M_{\varGammab}$ is an invertible operator in $\ell^2(\hhb)$ for some $z_0\in \cbb$. Let $\varXib=\{\varXi_x\colon x\in X\}$ with $\varXi_x=M_{\xi_x}\in\bsl(\hh_{\phi(x)}, \hh_x)$, $x\in X$. Suppose that $C_{\phi, \varXib}$ and $M_{\varGammab}$ are densely defined, and $\dom(C_{\phi, \varXib})\subseteq \dom(M_{\varGammab})$. Then $C_{\phi, \varXib} M_{\varGammab}= M_{\varGammab} C_{\phi, \varXib}$.
\end{cor}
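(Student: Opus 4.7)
The plan is to leverage the invertibility of $z_0 I-M_{\varGammab}$ to reduce the commutation problem involving the possibly unbounded $M_{\varGammab}$ to one involving the bounded resolvent $R:=(z_0 I-M_{\varGammab})^{-1}$, apply the already-proved Proposition \ref{lemKom1} in that bounded setting, and finally invoke Proposition \ref{quasilem1} to pass from inclusion to equality. The starting observation is that $R$ is itself a bounded multiplication operator: the invertibility assumption forces each $\gamma_x:=(z_0-\varGamma_x)^{-1}$ to be essentially bounded by $\|R\|$ uniformly in $x$, so $R=M_{\boldsymbol\gamma}$ for the uniformly bounded family $\boldsymbol\gamma:=\{\gamma_x\colon x\in X\}$.

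First I would apply Proposition \ref{lemKom1} with $\boldsymbol\gamma$ in place of $\varGammab$. The pointwise condition $M_{\gamma_x}M_{\xi_x}\subseteq M_{\xi_x}M_{\gamma_{\phi(x)}}$ required there follows immediately from the standing hypothesis $\varGamma_x=\varGamma_{\phi(x)}$ a.e.\ $[\mu_x]$, which propagates to $\gamma_x=\gamma_{\phi(x)}$ a.e.\ $[\mu_x]$ and thus $\gamma_x\xi_x=\xi_x\gamma_{\phi(x)}$ a.e.\ $[\mu_x]$. Proposition \ref{lemKom1} then delivers $R\,C_{\phi,\varXib}\subseteq C_{\phi,\varXib}\,R$. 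Next, using that $R$ is a bounded bijection of $\ell^2(\hhb)$ onto $\dom(M_{\varGammab})$, I would invert this inclusion to obtain $C_{\phi,\varXib}R^{-1}\subseteq R^{-1}C_{\phi,\varXib}$: if $\bsf\in\dom(M_{\varGammab})$ satisfies $R^{-1}\bsf\in\dom(C_{\phi,\varXib})$, then applying the first inclusion at $R^{-1}\bsf$ gives $\bsf=R(R^{-1}\bsf)\in\dom(C_{\phi,\varXib})$, $C_{\phi,\varXib}\bsf=R\,C_{\phi,\varXib}R^{-1}\bsf\in R(\ell^2(\hhb))=\dom(M_{\varGammab})$, together with $R^{-1}C_{\phi,\varXib}\bsf=C_{\phi,\varXib}R^{-1}\bsf$. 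Rewriting $R^{-1}=z_0 I-M_{\varGammab}$ and splitting $C_{\phi,\varXib}(z_0\bsf-M_{\varGammab}\bsf)$ linearly, using that both $\bsf$ and $M_{\varGammab}\bsf=z_0\bsf-R^{-1}\bsf$ lie in $\dom(C_{\phi,\varXib})$, the scalar $z_0$-contributions cancel on both sides and one arrives at the inclusion $C_{\phi,\varXib}M_{\varGammab}\subseteq M_{\varGammab}C_{\phi,\varXib}$.

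With this inclusion established, Proposition \ref{quasilem1} applies: the hypothesis $\dom(C_{\phi,\varXib})\subseteq\dom(M_{\varGammab})$ is standing, while the finiteness condition $H_x<\infty$ a.e.\ $[\mu_x]$ required there follows from the dense definedness of $C_{\phi,\varXib}$ by combining Proposition \ref{ortsumdense} with Lemma \ref{poswietach-1}. Proposition \ref{quasilem1} then upgrades the inclusion to the claimed equality $C_{\phi,\varXib}M_{\varGammab}=M_{\varGammab}C_{\phi,\varXib}$. I expect the main obstacle to sit in the second step, where one must bookkeep the domains carefully when translating commutation with the bounded resolvent $R$ back to commutation with the original unbounded $M_{\varGammab}$; the delicate point is observing that on the subspace in question one has simultaneously $\bsf$, $M_{\varGammab}\bsf$ and $R^{-1}\bsf$ all in $\dom(C_{\phi,\varXib})$, which is precisely what legitimises the linear splitting inside $C_{\phi,\varXib}$ and the cancellation of the $z_0 I$-terms.
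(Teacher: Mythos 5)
Your proposal follows the paper's proof of Corollary \ref{tikitaki} step for step: identify the resolvent $(z_0-M_{\varGammab})^{-1}$ with the bounded multiplication operator $M_{\varDeltab}$, $\varDelta_x=(z_0-\varGamma_x)^{-1}$, feed the pointwise identity $\varDelta_x=\varDelta_{\phi(x)}$ a.e.\ $[\mu_x]$ into Proposition \ref{lemKom1} to obtain $M_{\varDeltab}C_{\phi,\varXib}\subseteq C_{\phi,\varXib}M_{\varDeltab}$, deduce from this that $C_{\phi,\varXib}M_{\varGammab}\subseteq M_{\varGammab}C_{\phi,\varXib}$, and close with Proposition \ref{quasilem1} (your justification of the hypothesis $H_x<\infty$ a.e.\ via dense definiteness is also the paper's). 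The only place where you add content is the middle implication, which the paper dispatches with ``this in turn implies''.

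Precisely there your argument has a gap. Writing $R=M_{\varDeltab}$, you correctly derive $C_{\phi,\varXib}R^{-1}\subseteq R^{-1}C_{\phi,\varXib}$ from $RC_{\phi,\varXib}\subseteq C_{\phi,\varXib}R$. But $\dom\big(C_{\phi,\varXib}R^{-1}\big)=\{\bsf\in\dom(M_{\varGammab})\colon z_0\bsf-M_{\varGammab}\bsf\in\dom(C_{\phi,\varXib})\}$, and for $z_0\neq 0$ this set consists only of vectors that already lie in $\dom(C_{\phi,\varXib})$; it equals $\dom\big(C_{\phi,\varXib}M_{\varGammab}\big)\cap\dom(C_{\phi,\varXib})$, not $\dom\big(C_{\phi,\varXib}M_{\varGammab}\big)$. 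Your linear splitting therefore establishes the inclusion $C_{\phi,\varXib}M_{\varGammab}\subseteq M_{\varGammab}C_{\phi,\varXib}$ only on those $\bsf$ belonging to $\dom(C_{\phi,\varXib})$, while a vector $\bsf\in\dom(M_{\varGammab})\setminus\dom(C_{\phi,\varXib})$ with $M_{\varGammab}\bsf\in\dom(C_{\phi,\varXib})$ escapes the argument. Such vectors can genuinely occur: take $\varGammab\equiv 0$ and $z_0=1$; then every hypothesis of the corollary is met, yet $C_{\phi,\varXib}M_{\varGammab}$ is the zero operator on all of $\ell^2(\hhb)$ whereas $M_{\varGammab}C_{\phi,\varXib}$ is defined only on $\dom(C_{\phi,\varXib})$, so neither the intermediate inclusion nor the asserted equality holds when $C_{\phi,\varXib}$ is not everywhere defined. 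The gap (and the pathology) disappears under the additional hypothesis $\dom(M_{\varGammab})\subseteq\dom(C_{\phi,\varXib})$, which is satisfied in the paper's application of the corollary in the second proof of Theorem \ref{kryterium}, where the two domains coincide by Proposition \ref{dziekan}. To be fair, the paper's own proof asserts the same implication without justification, so you have reproduced rather than introduced the difficulty; but as written your step does not yield the inclusion on all of $\dom\big(C_{\phi,\varXib}M_{\varGammab}\big)$, and your closing remark that the delicate point is having $\bsf$, $M_{\varGammab}\bsf$ and $R^{-1}\bsf$ simultaneously in $\dom(C_{\phi,\varXib})$ is exactly where the unproved assumption hides.
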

\begin{proof}
Since $z_0-M_{\varGammab}$ is invertible, $z_0$ does not belong to the essential range of any $\varGamma_x$, $x\in X$, and $\big(z_0-M_{\varGammab}\big)^{-1}=M_{\boldsymbol{\varDelta}}$ where $\varDeltab=\{\varDelta_x\colon x\in X\}$ with $\varDelta_x:=(z_0-\varGamma_x)^{-1}$ (note that $\varGamma_x<\infty$ a.e. $[\mu_x]$ because $M_{\varGammab}$ is densely defined). Then $\varDelta_x=\varDelta_{\phi(x)}$ a.e. $[\mu_x]$ for every $x\in X$ which means that $M_{\varDelta_x}\varXi_x\subseteq\varXi_x M_{\varDelta_{\phi(x)}}, x\in X$. Consequently, by Proposition \ref{lemKom1}, we get $M_{\varDeltab}C_{\phi, \varXib}\subseteq C_{\phi, \varXib} M_{\varDeltab}$. This in turn implies that $C_{\phi, \varXib} M_{\varGammab}\subseteq M_{\varGammab}C_{\phi, \varXib}$. Dense definiteness of $C_{\phi, \varXib}$ yields $\sum_{y\in\phi^{-1}(\{x\})}\frac{\D|\xi_y|^2\mu_y}{\D\mu_x}<\infty$ a.e. $[\mu_x]$ for every $x\in X$. Hence, by Proposition \ref{quasilem1}, we show that $C_{\phi, \varXib} M_{\varGammab}$ and $M_{\varGammab}C_{\phi, \varXib}$ coincide.
\end{proof}
As a byproduct of Corollary \ref{tikitaki} we get another proof of Theorem \ref{kryterium}.
\begin{proof}[Second proof of Theorem \ref{kryterium}]
We apply Corollary \ref{tikitaki} with $(\varOmega,\bscr,\mu_x)=(W\times S, \ascr\otimes\varSigma,\widehat\varrho_x)$, $\varGamma_x=\sqrt{\Gsf_x}$, $\varXi_x=\widehat\varLambda_x$, and any $z_0\in \cbb$ with non-zero imaginary part. Clearly, since $M_{\varGammab}=M_{\sqrt{\Gsf}}$ is selfadjoint, $z_0-M_{\varGammab}$ is invertible. Moreover, $\dom(C_{\phi, \varXib})=\dom(C_{\phi,\widehat\varLambdab})=\dom(|C_{\phi,\widehat\varLambdab}|)=\dom(M_{\sqrt{\Gsf}})=\dom(M_{\varGammab})$ by Proposition \ref{dziekan}. Therefore, applying Corollary \ref{tikitaki} we get $C_{\phi,\widehat\varLambdab} M_{\sqrt\Gsf}= M_{\sqrt\Gsf} C_{\phi,\widehat\varLambdab}$, which implies that $C_{\phi,\widehat\varLambdab} M_{\Gsf}= M_{\Gsf} C_{\phi,\widehat\varLambdab}$. Applying Proposition \ref{dziekan} again, we obtain $C_{\phi,\widehat\varLambdab}^* C_{\phi,\widehat\varLambdab} C_{\phi,\widehat\varLambdab}=C_{\phi,\widehat\varLambdab} C_{\phi,\widehat\varLambdab}^* C_{\phi,\widehat\varLambdab}$. In view of \eqref{QQQ} the proof is complete.
\end{proof}
The final important observation is that the condition appearing in Theorem \ref{kryterium} is necessary for the quasinormality of $C_{\phi, \widehat\varLambda}$.
\begin{prop}
Assume \eqref{zemanek} and \eqref{majdak}. If $C_{\phi, \widehat \varLambdab}$ is quasinormal, then
\begin{align*}
\lambda_x\Gsf_{\phi(x)}=\lambda_x \Gsf_x\quad \text{a.e. $[\widehat\varrho_{x}]$ for every $x\in X$.}
\end{align*}
\end{prop}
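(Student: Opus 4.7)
The plan is to reverse the analysis used in the proof of Theorem \ref{kryterium}. Write $Q := C_{\phi,\widehat\varLambdab}$. By Proposition \ref{dziekan}, $Q^*Q = M_{\Gsf}$, so the criterion \eqref{QQQ} turns the hypothesized quasinormality into the operator identity $QM_{\Gsf} = M_{\Gsf}Q$ (equal domains, equal action). Evaluating both sides at an $\bsF$ in this common domain and reading off the $x$-th coordinate gives
\begin{align*}
\widehat\lambda_x\Gsf_{\phi(x)}F_{\phi(x)} = \Gsf_x\widehat\lambda_x F_{\phi(x)}\quad \text{a.e.\ } [\widehat\varrho_x], \quad x \in X.
\end{align*}
The task is to feed in one sufficiently good $\bsF$ per $x_0 \in X$ so as to cancel $F_{\phi(x_0)}$ and reach the claim.

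Fix $x_0 \in X$ and set $y_0 = \phi(x_0)$. The plan is to take $\bsF$ concentrated at coordinate $y_0$, $F_y := \delta_{y,y_0}G$, with $G$ strictly positive; then $G \neq 0$ holds $\widehat\varrho_{y_0}$-a.e., hence also $|\widehat\lambda_{x_0}|^2\widehat\varrho_{x_0}$-a.e.\ thanks to Lemma \ref{warunekC}(i), and cancelling $G$ in the displayed identity at $x = x_0$ yields $\widehat\lambda_{x_0}(\Gsf_{y_0}-\Gsf_{x_0}) = 0$ a.e.\ $[\widehat\varrho_{x_0}]$, which since $\widehat\lambda_{x_0}(w,s) = \lambda_{x_0}(w)$ is precisely the claim for $x_0$.

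To produce such a $G$, I mimic the construction in Lemma \ref{swieta} but with a slightly enlarged denominator: pick an $\ascr$-measurable $f\colon W\to(0,\infty)$ with $\int_W f \D\varrho<\infty$ (which exists by $\sigma$-finiteness of $\varrho$) and set
\begin{align*}
G(w,s):= \sqrt{\frac{f(w)}{1+\Gsf_{y_0}(w,s)+\Gsf_{y_0}(w,s)^2+\Gsf_{y_0}(w,s)^3}}.
\end{align*}
The same Fubini-type computation as in Lemma \ref{swieta} (using \eqref{normaroz}) shows that $\int (1+\Gsf_{y_0}+\Gsf_{y_0}^2+\Gsf_{y_0}^3)|G|^2 \D\widehat\varrho_{y_0} < \infty$. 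Using \eqref{obiad} together with the decomposition $X=\bigsqcup_y\phi^{-1}(\{y\})$, exactly as in the derivation of \eqref{ects3} and \eqref{ects4} within the proof of Theorem \ref{kryterium}, the membership $\bsF\in\dom(QM_{\Gsf})$ reduces to the two integrability conditions $\int \Gsf_{y_0}^2|G|^2 \D\widehat\varrho_{y_0}<\infty$ and $\int \Gsf_{y_0}^3|G|^2 \D\widehat\varrho_{y_0}<\infty$, both covered above. Quasinormality then places $\bsF$ in $\dom(M_{\Gsf}Q)$ as well, and the componentwise identity applies at $x=x_0$, finishing the argument.

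I expect the only delicate point to be arranging that a \emph{single} test function $\bsF$ lies in the domains of both triple compositions at once; this is exactly what the extra $\Gsf_{y_0}^3$-term in the denominator is designed to handle. Everything else is a direct unwinding of the componentwise identities already used to prove Theorem \ref{kryterium}.
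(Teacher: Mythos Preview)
Your approach is essentially the same as the paper's: both reduce quasinormality, via \eqref{QQQ} and Proposition~\ref{dziekan}, to the commutation identity $C_{\phi,\widehat\varLambdab}M_{\Gsf}=M_{\Gsf}C_{\phi,\widehat\varLambdab}$, and then plug in test vectors concentrated at the coordinate $\phi(x_0)$ to read off the pointwise equality. The paper simply invokes the exhaustion argument from the proof of Proposition~\ref{quasilem1} (characteristic functions $\chi_{\varOmega_n}$ on an increasing sequence of sets where the relevant quantities are bounded), whereas you use a single strictly positive weight $G$ in the spirit of Lemma~\ref{swieta}; these two devices are interchangeable.

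One small point you glossed over: Proposition~\ref{dziekan} has the hypothesis $\Gsf_x<\infty$ a.e.\ $[\widehat\varrho_x]$, and your construction of $G$ also needs this (at $y_0$) to make $G$ strictly positive. You should first note, as the paper does in its one-line proof, that this finiteness follows from the dense definiteness of $C_{\phi,\widehat\varLambdab}$, which is part of the definition of quasinormality.
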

\begin{proof}
Since $C_{\phi, \widehat \varLambdab}$ is densely defined $\Gsf_x<\infty$ a.e. $\widehat\varrho_{x}$. Now it suffices to argue as in the proof of Proposition \ref{quasilem1} to get $\lambda_x \Gsf_x = \lambda_x\Gsf_{\phi(x)}$ a.e. $[\widehat\varrho_x]$ (cf. \eqref{ups2}). 
\end{proof}

\section{Acknowledgments}
The research of the second and third authors was supported by the Ministry of Science and Higher Education (MNiSW) of Poland.
\bibliographystyle{amsalpha}

\end{document}